\newlength\knuthian@fdfive
\def\mathpal@save#1{\let\was@math@style=#1\relax}
\def\utilde#1{\mathpalette\mathpal@save
              {\setbox124=\hbox{$\was@math@style#1$}%
\setbox125=\hbox{$\fam=3\global\knuthian@fdfive=\fontdimen5\font$}
\setbox125=\hbox{$\widetilde{\vrule height 0pt depth 0pt width \wd124}$}%
               \baselineskip=1pt\relax
               \vtop{\copy124\copy125\vskip -\knuthian@fdfive}}}
\newcommand{\I}{{[0,1]}}
\newcommand{\A}{\mathsf{A}}
\newcommand{\B}{\mathsf{B}}
\newcommand{\C}{\mathsf{C}}
\newcommand{\D}{\mathsf{D}}
\newcommand{\G}{\mathsf{G}}
\newcommand{\F}{\mathscr{F}}
\newcommand{\Z}{\ensuremath{\mathbb{Z}}}
\newcommand{\uline}[1]{\underline{#1}}
\newcommand{\twiddle}[1]{\utilde{#1}}    
\DeclareMathOperator{\FV}{\mathscr{V}}
\DeclareMathOperator{\McN}{\mathscr{M}}
\DeclareMathOperator{\FW}{\mathscr{W}}
\DeclareMathOperator{\FI}{\mathscr{I}}
\DeclareMathOperator{\Q}{\mathscr{Q}}
\DeclareMathOperator{\FC}{\mathscr{C}}
\DeclareMathOperator{\FST}{\mathscr{S}}
\DeclareMathOperator{\MAX}{\mathrm{Max}}
\DeclareMathOperator{\MVp}{\mathcal{MV}_{\mathtt{p}}}
\DeclareMathOperator{\MV}{\mathcal{MV}}
\DeclareMathOperator{\K}{\mathcal{K}}
\DeclareMathOperator{\MVS}{\mathcal{SMV}}
\DeclareMathOperator{\TSO}{\mathcal{T}}
\DeclareMathOperator{\TS}{\mathcal{T}_{c}}
\DeclareMathOperator{\Hom}{\mathsf{Hom}}
\DeclareMathOperator{\st}{\mathtt{st}}
\DeclareMathOperator{\ev}{\mathtt{ev}}
\DeclareMathOperator{\cev}{\mathtt{co-ev}}
\newtheorem{theorem}{Theorem}[section]
\newtheorem{lemma}[theorem]{Lemma}
\newtheorem{corollary}[theorem]{Corollary}
\newtheorem{proposition}[theorem]{Proposition}
\theoremstyle{definition}%per chi segue questa linea
\newtheorem{definition}[theorem]{Definition}
\newtheorem{remark}[theorem]{Remark}
\newcommand{\lang}{\mathcal{L}}
\newcommand{\seq}{\subseteq}
\newcommand{\FMV}{\F}
\newcommand{\word}[1]{\emph{#1}}
\DeclareMathOperator{\rad}{\mathtt{rad}}
\newcommand{\pX}{X}
\newcommand{\pY}{Y}
\renewcommand{\pm}{m}
\begin{document}

\title[MV-algebras and natural dualities]{MV-algebras, infinite dimensional polyhedra, and natural dualities}

\author{Leonardo M. Cabrer \and Luca Spada}

\address{Leonardo M.\ Cabrer:
Institute of Computer Languages,
Technische Universit\"at Wien,
Favoritenstrasse 9-11, A-1040 Wien, Austria. 
%             \emph{Present address:} of F. Author  %  if needed
 }
\email{ leonardo.cabrer@logic.at }

\address{Luca Spada:
Dipartimento di Matematica, Universit\`a degli Studi di Salerno, Via Giovanni Paolo II 132, 84084 Fisciano (SA), Italy. 
}
\email{ lspada@unisa.it }

%\date{Received: date / Accepted: date}
% The correct dates will be entered by the editor

\begin{abstract}
We connect the dual adjunction between MV-algebras and Tychonoff spaces with the general theory of natural dualities, and provide a number of applications.  In doing so, we simplify the aforementioned construction by observing that there is no need of using \emph{presentations} of MV-algebras in order to obtain the adjunction. We also provide a description of the dual maps that is intrinsically geometric, and thus avoids the syntactic notion of \emph{definable map}. Finally, we apply these results to better explain the relation between semisimple  tensor products and coproducts of MV-algebras, and we extend beyond the finitely generated case the characterisations of strongly semisimple and polyhedral MV-algebras.
\end{abstract}

\keywords{ MV-algebras \and Adjunction \and Natural Duality \and Semisimple \and \Z-map}
\thanks{2010  {\it Mathematics Subject Classification.} 06D35 \and 03B50\and 55U10 \and 08C20}
\thanks{This research was supported by  
 Marie Curie Intra European Fellowships 
 number 299401-FP7-PEOPLE-2011-IEF and 299071-FP7-PEOPLE-2011-IEF.
 The first-named author acknowledges partial support from the Austrian Science Fund (FWF), START project Y544.
The second-named author acknowledges partial support from the Italian National Research Project (PRIN2010--11) entitled \emph{Metodi logici per il trattamento dell'informazione}.}

\maketitle

%%%%%%%%%%%%%%%%%%%%%%%%%%%%%%%%%%%%%%
%\newpage
%\tableofcontents

%%%%%%%%%%%%%%%%%%%%%%%%%%%
\section{Introduction}
\label{sect:intro}
%%%%%%%%%%%%%%%%%%%%%%%%%%%
MV-algebras were introduced in 1959 by C.\ C.\ Chang as the equivalent algebraic semantics of {\L}ukasieiwcz logic.  Since then they increasingly attracted the attention of researchers for their surprising connections with other fields of mathematics such as lattice ordered abelian groups, measure theory, quantum mechanics, toric varieties, etc. The reader is referred to \cite{Mun2011} for an updated account of their theory. 

In the last decades  a number of different approaches to describe dual categories of MV-algebras have been proposed. One of the first attempts can be found in \cite{martinez1990priestley,martinez1996simplified}, where the author provides a dual equivalence based % by ``piggy-baking'' 
on Priestley's duality for distributive lattices.  A similar approach can also be found in \cite{cabrer2006priestley}. 
 A different approach, is presented in \cite{niederkorn2001natural}; there the author directly develops dualities for finitely generated subvarieties of MV-algebras (i.e., the ones generated by a finite number of finite algebras) using the general theory of natural dualities (see \cite{NatDual88}). 

 In a different vein, in \cite{cignoli2004extending} the authors start from the duality between finite multisets and finite MV-algebras and build, using the ind and pro completions, a duality between locally finite MV-algebras and a category of Stone spaces endowed with a multiplicity function. Yet another method used to  explore categories that are dual to MV-algebras is provided by sheaf theory. Indeed, in  \cite{filipoiu1995compact} the authors prove that the whole category of MV-algebras is equivalent to the category of sheaves on a compact Hausdorff space whose stalks are local MV-algebras (i.e.,\ MV-algebras with a unique maximal ideal); although independent, this result can be seen as the translation in the setting of MV-algebras of the sheaf representation of lattice ordered abelian groups provided in \cite{MR0422107}.  Another duality, presented in \cite{DuPo2010}, connects the whole category of MV-algebras with the category of sheaves whose stalks are linearly ordered and whose base space is the spectrum of prime ideals of an MV-algebra; also in this case the duality can be seen as a consequence of an older result \cite{cornish1977chinese}.
Finally, we mention \cite{gehrke2014sheaf} and references therein for an another approach to dualities for MV-algebras based on Priestley duality, as well as for a unified treatment of sheaf representations of MV-algebras.

In this paper we will concentrate on a duality for semisimple MV-algebras presented in \cite{MarSpa2013}.  There the authors generalise the classical dual adjunction between ideals of polynomials and affine varieties to the setting of MV-algebras, thus providing an adjunction between the whole category of MV-algebras and the category of Tychonoff spaces with appropriates arrows, called ``definable maps''.  The duality for semisimple MV-algebras, as well as the one between finitely presented MV-algebras and rational polyhedra is then obtained by specialising such an adjunction.  A similar approach turns out to be possible for any variety of algebras, the reader interested on this aspect is referred to \cite{Caramello:2014aa}.

Here we will be concerned with the connection between the above geometrical dualities with the general theory of natural dualities and the applications of this new approach.

The rest of the paper is arranged in three sections. In section \ref{Sec:Preliminaries} we provide some basic definitions in universal algebra and piecewise linear geometry needed to recall the adjunction of \cite{MarSpa2013}.  
Section \ref{sec:adjoint} is devoted to showing the connections between the approach of \cite{MarSpa2013} and the theory of natural dualities.  Specifically, Theorem \ref{thm:def-maps} provides a purely geometrical characterisation of the definable maps of \cite{MarSpa2013} and Theorem \ref{Theo:DualNat} shows that the functors of \cite{MarSpa2013} are naturally equivalent to the functors arising form the theory of natural dualities.
In section \ref{sect:applications} we apply the theory developed by extending some results in MV-algebras beyond the finitely generated case. More specifically, in Theorem \ref{theo:tensor} we give a characterisation of the dual space of the (semisimple) tensor product of MV-algebras. In Theorem \ref{theorem:aereo} we give a characterisation of strongly semisimple MV-algebras that extends \cite{BuMu201X} and \cite{Cab201X}.  Finally, in Theorem \ref{Theo:CharPol} we give a characterisation of polyhedral MV-algebras that extends the results of \cite{BusCabMun201X}. 

%%%%%%%%%%%%%%%%%%%%%%%%%%%
\section{Preliminaries}
\label{Sec:Preliminaries}
%%%%%%%%%%%%%%%%%%%%%%%%%%%

%%%%%%%%%%%%%%%%%%%%%%%%%%%
\subsection{Universal algebra} 
%%%%%%%%%%%%%%%%%%%%%%%%%%%

The reader is referred to \cite{McL1998} for background in category theory and to \cite{BS1981} for universal algebra.
For convenience, let us assume in the subsequent discussion that $\lang$ is a fixed algebraic language; we call $\lang$-algebra any algebraic structure for this language.  A \word{prevariety} $\K$ is a class of algebras sharing the same language $\lang$,
that is closed under isomorphic copies, subalgebras and products; in symbols $\K=\mathbb{ISP}(\K)$. If  in addition a prevariety is closed under homomorphic images then it is called \emph{variety}. For the rest of this section $\K$ will denote a prevariety.

Given a set $X$, we indicate by $\F_{\K}(X)$ the \word{algebra freely generated by $X$ in~$\K$}, i.e., the unique algebra (up to isomorphism) for which any function $f \colon X \to B$, where $\B \in \K$,  extends uniquely to a homomorphism
$g \colon \F_{\K}(X) \to \B$.  We call the elements of $X$, the \word{free generators} of $\F_{\K}(X)$.
Let $S\seq(\F_{\K}(X))^2$ and $\theta(S)$ be the congruence of $\F_{\K}(X)$ generated by $S$. If $\A\in \K$ is  isomorphic to $\F_{\K}(X)/\theta(S)$, we say that $(X,S)$ is a \word{presentation} of $\A$. 

An algebra $\A\in \K$ is said to be a \word{finitely generated} if there exists a presentation $(\pX,S)$ where $X$ is finite; if in addition $S$ is also finite, then $\A$ is called \word{finitely presented}.  If $\K$ is a variety, for each set $X$ and $S\seq(\F_{\K}(X))^2$, the pair $(\pX,S)$ is a presentation of the algebra $\F_{\K}(X)/\theta(S)\in\K$. 

Following  \cite{MarSpa2013} we call $\K_{\mathtt{p}}$ the category whose objects are presentations of algebras in $\K$ and morphisms are homomorphisms between the corresponding quotient algebras, that is 
\begin{align*}
&h\in\Hom_{\K_{\mathtt{p}}}\big(\,(\pX,S),(\pY,T)\,\big)\quad\text{ iff }\quad h\in \Hom_{\K}\big(\,\F_{\K}(X)/\theta(S),\F_{\K}(Y)/\theta(T)\,\big).
\end{align*}
To fix the notation, we set
\begin{definition}[The functor $\Q$]\label{d:functor-Q}
The assignment $\Q_{\K}$, from $\K_{\mathtt{p}}$ to $\K$, is defined as
\begin{align*}
&\text{on objects:} & \Q_{\K}\left((\pX,S)\right):=\F_{\K}(X)/\theta(S)\\
&\text{on morphisms:}  & \Q_{\K}(h):=h
\end{align*}
\end{definition}
It is trivial to see that $\Q$ is a functor. 

Since every algebra in a (pre)variety is a quotient of a free algebra, that is, it admits a presentation, the functor $\Q_{\K}$ is dense. Since morphisms in $\K$ and $\K_{p}$ are the same and $\Q_{\K}$ acts as the identity on morphisms, $\Q_{\K}$  is full and faithful. Therefore, the categories $\K_{\mathtt{p}}$ and $\K$ are equivalent (see \cite[Theorem VI.4.1]{McL1998}).
\begin{definition}[Choice functors]\label{d:choice-functor}
Any functor $\FC\colon\K\to \K_{\mathtt{p}}$ such that $\Q_{\K}\circ \FC$ is naturally isomorphic to the identity functor on $\K$ is called a \emph{choice functor}. 
\end{definition}
The reason for the previous definition, is that each such a functor assigns an arbitrary presentation for each algebra in $\K$, in other words it ``chooses'' a presentation. Trivially, the choice functors for $\K$ are naturally equivalent.

For an algebra $\A\in\K$ and $X\seq A$, let $\st_{X}\colon \F_{\K}(X)\to \A$ be the unique homomorphism from $\F_{\K}(X)$ to $\A$ that extends the inclusion of $X$ into $A$. When $X=A$, we call $\st_{X}$ the \word{structure of $\A$} and we set $\theta_{\A}:=\ker(\st_\A)$.  Among all possible choice functors for a variety $\K$, a canonical one is provided by the \emph{structure choice functor} $\FST_{\K}\colon \K\to \K_{\mathtt{p}}$ that we describe below. 
\begin{definition}[Structure Choice Functor]\label{d:structure-functor}
Let $\iota_{\A}\colon \A\to\F_{\K}(A){/}\theta_\A$, be the isomorphism defined by $\iota_{\A}(a)=[a]_{\theta_{\A}}$, i.e., $\iota_{\A}$ maps $a$ into the equivalence class of $a$ modulo the congruence $\theta_{A}$, (see Fig.~\ref{Fig:Sta}). 
\begin{figure}[h!]
\centering
\begin{tikzpicture} 
[auto,
 text depth=0.25ex,
 move up/.style=   {transform canvas={yshift=1.9pt}},
 move down/.style= {transform canvas={yshift=-1.9pt}},
 move left/.style= {transform canvas={xshift=-2.5pt}},
 move right/.style={transform canvas={xshift=2.5pt}}] 
\matrix[row sep= 1cm, column sep= 1.3cm]  
{ 
 \node (FA) {$\F_{\K}(A)$}; &  \node (AA) {$\A$};\\
 & \node(Q) {$\F_{\K}(A)/\theta_{\A}$};  \\ 
};
\draw  [->] (FA) to node [swap]  {$[\_]_{\theta_{\A}}$}(Q);
\draw [->] (AA) to node  {$\iota_{\A}$}(Q);
\draw [->] (FA) to node  {$\st_{\A}$}(AA);
\end{tikzpicture}
\caption{}\label{Fig:Sta}
\end{figure}

Then  $\FST_{\K}\colon \K\to \K_{\mathtt{p}}$ is defined:
\begin{align*}
&\text{on objects:} & \FST_{\K}(\A):=(A,\theta_{\A}),
\end{align*}
and if $h\colon \A\to \B$  is a $\K$-homomorphism,
\begin{align*}
&\text{on morphisms:}  & \FST_{\K}(h):= \iota_{\B}\circ h\circ\iota_{\A}^{-1}
\end{align*}
It is easy to see that $\FST_{\K}$ is a well-defined functor and $\Q_{\K}\circ \FST_{\K}$ is naturally isomorphic to the identity functor on $\K$.
\end{definition}
Since in the rest of the paper we will only work with MV-algebras, we will simply write $\FST$ for $\FST_{\MV}$.

%%%%%%%%%%%%%%%%%%%%%%%%%%%
\subsection{MV-algebras}
%%%%%%%%%%%%%%%%%%%%%%%%%%%

The standard references for MV-algebras are \cite{CigDotMun2000,Mun2011}.  An MV-algebra is an abelian monoid $(A,\oplus,0)$ equipped with an operation $\neg$  such that  $\neg\neg x=x$, $x\oplus \neg 0 = \neg 0$  and $x \oplus \neg(x \oplus \neg y)=y \oplus \neg(y \oplus \neg x)$. The \word{standard MV-algebra} $[0,1]$ is the MV-algebra whose universe is the interval $[0,1]$, its operations are defined by  $x\oplus y=
\min\{x + y, 1\}$, and $\neg x = 1 - x$ and the neutral element is~$0$.

A congruence on an MV-algebra $\A$ is \word{maximal} if it is maximal in the poset of proper congruences of $\A$. 
An MV-algebra is said to be 
\word{simple} if the identity congruence is maximal, and \word{semisimple} if it is a subdirect product of simple MV-algebras.
By \cite[Theorem 3.5.1]{CigDotMun2000}, an MV-algebra is simple if and only if it is isomorphic to a subalgebra of the standard MV-algebra $[0,1]$, that is, if it belongs to $\mathbb{IS}([0,1])$. Therefore, the class $\MVS$ of semisimple MV-algebras coincides with $\mathbb{ISP}([0,1])$.
By Chang's completeness theorem (see e.g.\ \cite[Theorem 2.5.3]{CigDotMun2000}) the class of all MV-algebras coincides with $\mathbb{HSP}([0,1])$. Since $[0,1]$ generates the variety of MV-algebras, by a general result of Birkhoff, the free MV algebra on $\kappa$ generators is isomorphic to the subalgebra of $[0,1]^{[0,1]^{\kappa}}$ generated by the projections.  McNaughton in \cite{mcnaughton1951theorem}, provided a nifty characterisation of those functions.  We postpone the statement of his theorem to Theorem \ref{t:McNaughton}, as we first  need to recall some basic preliminaries in polyhedral topology.

%%%%%%%%%%%%%%%%%%%%%%%%%%%
\subsection{Polyhedral topology} 

%%%%%%%%%%%%%%%%%%%%%%%%%%%

We refer to \cite{Ew1996} for background on polyhedral topology.
A simplex $S ={\rm conv}(v_{0},\dots,v_m) \seq
\mathbb R^{n}$ is called $m$-simplex if $\{v_{0},\dots,v_m\}$ are affine linearly independent. The simplex $S$  is said to be \word{rational} if the coordinates of $v_i$ are rational numbers for each $i=0,\ldots,m$. 
A \word{polyhedron} $P$  in $\mathbb R^{n}$  is a finite union of (always closed) simplices $P=S_{1}\cup\cdots\cup S_{t}$ in $\mathbb R^n$. The set $P$ is said to be a \word{rational polyhedron} if there are rational simplices $T_{1},\ldots,T_{l}$ such that $P=T_{1}\cup\cdots\cup T_{l}$.  If $I'\seq I$ are arbitrary sets, we denote by $\pi^{I}_{I'}$ the projection map from $[0,1]^{I}$ into $[0,1]^{I'}$.

\begin{definition}[Piecewise Linear Map]\label{d:piecewise}
Let $P\seq [0,1]^{n}$ and $f\colon P\to[0,1]^{}$ be a continuous map, we say that $f$ is \emph{piecewise linear} if there exists a finite set of (affine) linear maps $f_{1},...,f_{k}\colon[0,1]^{n}\to[0,1]$ such that for every $x\in P$ there exists $1\leq i\leq k$ and $f(x)=f_{i}(x)$. 

More generally, if $P\seq [0,1]^{I}$ where I is an arbitrary set, we say that $f$ is \emph{piecewise linear} if there exists a finite subset $I'\seq I$ and a piecewise linear map $f'\colon [0,1]^{I'}\to[0,1]$ such that for every $p\in P$, $f(p)=f'(\pi^{I}_{I'}(p))$.

\end{definition}

\begin{theorem}[McNaughton Theorem]\label{t:McNaughton}
For any cardinal $\kappa$, the free MV-algebra on $\kappa$ generators is isomorphic to the algebra of continuous maps form $[0,1]^{\kappa}$ into $[0,1]$ which are piecewise linear and have integer coefficients.
\end{theorem}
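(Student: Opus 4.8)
The plan is to first settle the finite-generator case, which is the classical statement, and then to promote it to an arbitrary cardinal $\kappa$ by a directed-colimit argument that exploits the factorisation through finite coordinate sets built into Definition~\ref{d:piecewise}. Write $M_{\kappa}$ for the set of continuous, piecewise linear maps $[0,1]^{\kappa}\to[0,1]$ with integer coefficients; by the consequence of Birkhoff's theorem recalled above, it suffices to show that the subalgebra of $[0,1]^{[0,1]^{\kappa}}$ generated by the coordinate projections coincides with $M_{\kappa}$.

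For finite $\kappa=n$, one inclusion is routine, and I would dispatch it first. The set $M_{n}$ is a subalgebra of $[0,1]^{[0,1]^{n}}$: it contains the constant $0$; it is closed under $\neg f=1-f$, which preserves continuity, piecewise linearity and integrality of the coefficients; and it is closed under $f\oplus g=\min\{f+g,1\}$, where one passes to a common refinement of the linear pieces of $f$ and $g$ and notes that the resulting pieces and the constant $1$ still have integer coefficients. Since each projection $\pi_{i}$ lies in $M_{n}$, the subalgebra they generate is contained in $M_{n}$.

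The reverse inclusion is the substance of McNaughton's theorem \cite{mcnaughton1951theorem}, and is where I expect the real work to lie. Given $f\in M_{n}$, I would choose a rational triangulation of $[0,1]^{n}$ on each simplex of which $f$ agrees with an affine map having integer coefficients. Each such affine piece is realised by an MV-term, using that integer dilations $kx=x\oplus\cdots\oplus x$ and truncated subtractions are MV-definable, and the global function is then reassembled from its pieces by the lattice operations $\wedge,\vee$ (themselves MV-terms) following McNaughton's patching lemma, which expresses $f$ in the form $\bigvee_{i}\bigwedge_{j}(\text{pieces})$ valid on all of $[0,1]^{n}$. This produces an MV-term representing $f$, so $f$ belongs to the subalgebra generated by the projections, closing the finite case.

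Finally I would pass to arbitrary $\kappa$. The free MV-algebra $\F_{\MV}(\kappa)$ is the directed union of its subalgebras $\F_{\MV}(F)$ generated by finite $F\subseteq\kappa$, since any MV-term involves only finitely many variables; the connecting maps are the inclusions induced by $F\subseteq F'$. On the geometric side, Definition~\ref{d:piecewise} says precisely that every $f\in M_{\kappa}$ factors as $f=f'\circ\pi^{\kappa}_{F}$ for some finite $F$ and $f'\in M_{F}$, and precomposition with $\pi^{\kappa}_{F}$ is an injective MV-homomorphism $M_{F}\hookrightarrow M_{\kappa}$ compatible with the inclusions $F\subseteq F'$. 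Hence $M_{\kappa}=\bigcup_{F}(\pi^{\kappa}_{F})^{\ast}(M_{F})$ is the directed colimit of the same system. The finite-case isomorphisms then identify the two directed systems, and a routine verification that they commute with the connecting maps—the only genuinely new point beyond the classical theorem—yields the isomorphism $\F_{\MV}(\kappa)\cong M_{\kappa}$.
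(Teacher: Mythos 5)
The paper does not prove this statement at all: it is quoted as a classical background result, with the finite-variable case attributed to \cite{mcnaughton1951theorem} and the general case left implicit. Your proposal is therefore doing more than the paper does, and what it does is correct. The reduction to the finite case plus a directed-colimit argument is the standard (and essentially the only) way to handle arbitrary $\kappa$: free algebras in any variety are directed unions of the free algebras on finite subsets of the generators, and Definition~\ref{d:piecewise} is engineered precisely so that $M_{\kappa}=\bigcup_{F}(\pi^{\kappa}_{F})^{\ast}(M_{F})$ over finite $F\seq\kappa$; since the finite-case isomorphism sends free generators to coordinate projections, naturality in $F$ is automatic and the two colimits agree. (In fact you could state this even more directly: both $\F_{\MV}(\kappa)$ and $M_{\kappa}$ sit inside $[0,1]^{[0,1]^{\kappa}}$ and consist exactly of the functions of the form $f'\circ\pi^{\kappa}_{F}$ with $F$ finite and $f'$ in the corresponding finite-variable algebra, so they are literally equal as subalgebras.) The one caveat is that the genuinely hard step, the inclusion $M_{n}\seq\langle\pi_{1},\dots,\pi_{n}\rangle$ in the finite case, is not proved but deferred to McNaughton's patching lemma; that is a legitimate citation (and matches the paper's own treatment), but you should be aware it is where all the work lives. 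A minor wrinkle: Definition~\ref{d:piecewise} as written asks the pieces $f_{1},\dots,f_{k}$ to map $[0,1]^{n}$ into $[0,1]$, so in your closure-under-$\oplus$ argument the refined pieces $f_{i}+g_{j}$ need not literally qualify; one should either read the pieces as affine maps into $\mathbb{R}$ (the standard convention) or truncate them, but this is a quibble about the paper's definition rather than a gap in your argument.
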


%%%%%%%%%%%%%%%%%%%%%%%%%%%
\section{The adjunction between MV-algebras and Tychonoff spaces}
\label{sec:adjoint}
%%%%%%%%%%%%%%%%%%%%%%%%%%%

We start this section by briefly recalling the basic adjunction presented in~\cite{MarSpa2013}. 
We present these results using the approach to presented algebras given in Section~\ref{Sec:Preliminaries}. 

\begin{definition}[Definable Map]

Given  $P\seq[0, 1]^{I}$ and $Q\seq [0, 1]^{J}$, a function $\eta\colon P \to Q$ is
\emph{definable} if there exist elements $t_j\in\FMV(I)$ for $j\in J$, such that 
\begin{equation}\label{eq:definable}
(\eta( p ))(j) = t_j(p)=\bar{p}(t_j)\text{ for each }p\in P\text{ and }j\in J,
\end{equation}
where $\bar{p}\colon \FMV(I)\to [0,1]$ is the unique MV-homomorphism that extends the map $p\colon I\to [0,1]$.
\end{definition}

Let $\TSO$ denote the category of subspaces of the Tychonoff cubes $[0,1]^{I}$, with $I$ ranging among sets and definable maps as morphisms.
Given a set $S\seq (\FMV(I))^{2}$, we define 
\begin{align}
\label{eq:V}\mathbb{V}(S)=\{p\in [0,1]^I\mid  \bar{p}(s)= \bar{p}(t) \mbox{ for each }(s,t)\in \F^{2}(I)\}.
\end{align}
Given a set $P\seq [0,1]^I$, 
\begin{align}
\label{eq:I}\mathbb{I}(P)=\{(s,t)\in \FMV^{2}(I)\mid  \bar{p}(s)= \bar{p}(t) \mbox{ for each }p\in P\}.
\end{align}

By \cite[Theorem 3.1]{MarSpa2013},  the pair $(\mathbb{V},\mathbb{I})$ forms a Galois connection, such that 
\begin{align*}
\mathbb{V}(\mathbb{I}(P))&={\rm cl}(P)
\end{align*} 
where ${\rm cl}$ denotes the closure operator in the Euclidean topology, and 
\begin{align*}
\mathbb{I}(\mathbb{V}(S))&=\rad(S),
\end{align*} 
where $\rad(S)$ denotes the \word{radical of $S$}, i.e., the intersection of all maximal congruences of 
$\FMV(I)$ containing $S$.

\begin{definition}[The functors $\FV$ and  $\FI$]\label{d:functors-V-and-I}
Let $\FV\colon \MVp\to \TSO$ be the functor defined by:
\begin{align*}
&\text{on objects:} & \FV( \FMV(I),S)= \mathbb{V}(S)\seq [0,1]^{I}, 
%\end{align*}
\intertext{and, if $h\colon \FMV(I)/\theta(S)\to \FMV(J)/\theta(R)$ is a homomorphism and $p\in \mathbb{V}(R)$,}
%\begin{align*}
&\text{on morphisms:}  & \bigl(\FV(h)(p)\bigr)(j)= \bar{p}(t_{i}), \text{for each }i\in I
\end{align*}
where $t_i\in \FMV(J)$ is an arbitrary element of $[h(i)]_{\theta(R)}$.

Let $\FI\colon \TSO\to \MVp$ be the functor defined by:
\begin{align*}
&\text{on objects:} & \FI( P\seq [0,1]^I)= (\FMV(I), \mathbb{I}(P)), 
%\end{align*}
\intertext{and, if $P\seq [0,1]^{I}$, $Q\seq [0,1]^{J}$, $f\colon P\to Q$ is a definable map, and $t_j\in \FMV(I)$ for each $j\in J$ are defining terms for $f$, }
%\begin{align*}
&\text{on morphisms:}  & \bigl(\FI(f))\bigr)([s((X_{j})_{j\in J})]_{\mathbb{I}(Q)}= [s((X_{j}/t_{j})_{j\in J})]_{\mathbb{I}(P)},
\end{align*}
where by $s(X/t)$ we mean, as usual, the term obtained from $s(X)$ by uniformly substituting the occurrences of $X$ for $t$.
\end{definition}
In \cite[Theorem 2.8]{MarSpa2013}, the authors prove that these functors are well-defined, and that they form a dual adjunction between presented MV-algebras and Tychonoff spaces. Furthermore, by \cite[Corollary 3.2]{MarSpa2013}, when restricted to semisimple MV-algebras and to \emph{closed} subspaces of the Tychonoff cubes, $\FV$ and $\FI$ determine a dual categorical equivalence.

%%%%%%%%%%%%%%%%%%%%%%%%%%%
\subsection{Definable maps vs.  \Z-maps}
\label{sect:maps}
%%%%%%%%%%%%%%%%%%%%%%%%%%%

The restriction of $\FV$ and $\FI$ to finitely presented MV-algebras deserves special attention. 
In \cite[section~4]{MarSpa2013} it is proved that given a finite presentation $(\pm,S)$, its dual space  $\FV(\pm,S)\seq \I^m$ is a rational polyhedron, and vice versa if $\FV(\pm,S)\seq \I^m$ is a rational polyhedron then there exists a finite $T\seq (\FMV(m))^2$ such that $\rad(S)=\rad(T)$. As a consequence, the restriction of the functor $\FV$ to finite presentations determines a categorical duality between finitely presented MV-algebras and the category of rational polyhedra with definable maps. 
McNaughton theorem (Theorem \ref{t:McNaughton}), then allows a purely geometric description of the latter category by noticing that on rational polyhedra definable maps are exactly the piecewise linear maps with integer coefficients.

In \cite{BusCabMun201X}, the authors improve the above results by showing that definable maps on compact subset of $\I^n$ again admit a geometrical description\footnote{Actually, the result in \cite{BusCabMun201X} refers to compact sets in $\mathbb{R}^n$ instead of closed subsets of $\I^n$, but the difference is immaterial as noted in \cite[Claim 3.5]{MarSpa11}.}; therefore giving a purely geometrical description of the dual of the full subcategory of \emph{finitely generated} MV-algebras.
We now give a slightly more general definition of $\Z$-maps that  affords a complete, geometrical characterisation of definable maps on any subset of $\I^{I}$.
\begin{definition}[$\Z$-map] \label{definition:zeta}
Let $I,J$ be arbitrary sets and $P\seq \I^{I}$ and $Q\seq \I^{J}$.  A map $\eta\colon P\to Q$ is called a \emph{$\Z$-map} if for every $j\in J$ there exists a piecewise linear map with integer coefficients $\xi_{j}\colon \I^{I}\rightarrow \I$, such that for any $p\in P$
\[\eta(p)=\big(\xi_{j}(p)\big)_{j\in J}\ ,\]
where (by the definition of piecewise linear map) each map $\xi_{j}$ depends only upon finitely many variables.
\end{definition}

\begin{theorem}\label{thm:def-maps}
Let $I,J$ be arbitrary sets and $P\seq \I^{I}$ and $Q\seq \I^{J}$.  For any map $\eta\colon P\to Q$, the following are equivalent 
\begin{enumerate}[(i)]
\item\label{thm:def-maps:item1} the map $\eta$ is definable;
\item\label{thm:def-maps:item2} the map $\eta$ is a $\Z$-map;
\item\label{thm:def-maps:item3} for each finite subset $J'\seq J$ there exists a finite set $I'\seq I$ and a $\Z$-map $\xi\colon \I^{I'}\to \I^{J'}$ such that $\pi^{J}_{J'}\circ\eta=\xi\circ\pi^{I}_{I'}$.
\begin{center}
\begin{tikzpicture} 
[auto,
 text depth=0.25ex,
 move up/.style=   {transform canvas={yshift=1.9pt}},
 move down/.style= {transform canvas={yshift=-1.9pt}},
 move left/.style= {transform canvas={xshift=-2.5pt}},
 move right/.style={transform canvas={xshift=2.5pt}}] 
\matrix[row sep= 1cm, column sep= 1.3cm]  
{ 
\node (X) {$P$}; & \node(Y) {$Q$};  \\ 
\node (IP) {$\I^{I'}$}; & \node(JP) {$\I^{J'}$};  \\ 
};
\draw  [->] (X) to node   {$\eta$}(Y);
\draw [->] (X) to node [swap] {$\pi^{I}_{I'}$}(IP);
\draw [->] (Y) to node  {$\pi^{J}_{J'}$}(JP);
\draw  [->] (IP) to node  [swap]  {$\xi$}(JP);
\end{tikzpicture}
\end{center}
\end{enumerate}
\end{theorem}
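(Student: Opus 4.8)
The plan is to route the whole equivalence through McNaughton's theorem (Theorem~\ref{t:McNaughton}), which will give \emph{(i)}$\Leftrightarrow$\emph{(ii)} almost for free, and then to settle \emph{(ii)}$\Leftrightarrow$\emph{(iii)} by elementary bookkeeping of the coordinates on which each component map depends. The single identification doing all the work is this: under the McNaughton isomorphism, $\FMV(I)$ is the algebra of piecewise linear integer maps $\I^{I}\to\I$ with the free generators $X_i$ sent to the coordinate projections, and for $p\in\I^{I}$ the homomorphism $\bar p\colon\FMV(I)\to\I$ extending $p$ is precisely evaluation at $p$, i.e.\ $\bar p(t)=t(p)$ for every $t\in\FMV(I)$. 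Indeed, the evaluation map $t\mapsto t(p)$ is an MV-homomorphism on the pointwise function algebra $\FMV(I)$, and it agrees with $\bar p$ on the generators (both send $X_i$ to $p(i)$, the $i$-th coordinate of $p$), so the two homomorphisms coincide.

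For \emph{(i)}$\Rightarrow$\emph{(ii)} I would regard each defining term $t_j\in\FMV(I)$ as a piecewise linear integer map $\xi_j\colon\I^{I}\to\I$ through McNaughton; then $\eta(p)(j)=\bar p(t_j)=t_j(p)=\xi_j(p)$, and since each such McNaughton function depends on only finitely many variables (this is built into Definition~\ref{d:piecewise}), $\eta$ is a $\Z$-map. Conversely, a $\Z$-map furnishes maps $\xi_j$ which, again by McNaughton, are elements $t_j\in\FMV(I)$, and the very same identity recovers definability; this settles \emph{(ii)}$\Rightarrow$\emph{(i)}.

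For \emph{(ii)}$\Rightarrow$\emph{(iii)} I would fix a finite $J'\seq J$. Each $\xi_j$ with $j\in J'$ depends only on a finite coordinate set $I_j\seq I$, so $I':=\bigcup_{j\in J'}I_j$ is finite, and here finiteness of $J'$ is exactly what is needed. Each $\xi_j$ then factors as $\tilde\xi_j\circ\pi^{I}_{I'}$ for a piecewise linear integer $\tilde\xi_j\colon\I^{I'}\to\I$, so that $\xi:=(\tilde\xi_j)_{j\in J'}\colon\I^{I'}\to\I^{J'}$ is a $\Z$-map; a componentwise check, $\big(\pi^{J}_{J'}(\eta(p))\big)(j)=\xi_j(p)=\tilde\xi_j(\pi^{I}_{I'}(p))=\big(\xi(\pi^{I}_{I'}(p))\big)(j)$, gives the commutativity $\pi^{J}_{J'}\circ\eta=\xi\circ\pi^{I}_{I'}$. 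For \emph{(iii)}$\Rightarrow$\emph{(ii)} I would fix $j\in J$ and apply \emph{(iii)} with $J'=\{j\}$ to obtain a finite $I'\seq I$ and a $\Z$-map $\xi\colon\I^{I'}\to\I^{\{j\}}$; its unique component, precomposed with $\pi^{I}_{I'}$, is a piecewise linear integer map $\xi_j\colon\I^{I}\to\I$ supported on the finite set $I'$, and commutativity of the square forces $\eta(p)(j)=\xi_j(p)$ for all $p\in P$. Letting $j$ range over $J$ exhibits $\eta$ as a $\Z$-map.

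There is no deep obstacle: the conceptual heart is the translation $\bar p(t)=t(p)$, which converts the syntactic defining terms of \emph{(i)} into the semantic McNaughton functions of \emph{(ii)}. The only step requiring genuine care is the finiteness of $I'=\bigcup_{j\in J'}I_j$ in \emph{(ii)}$\Rightarrow$\emph{(iii)}, which rests on $J'$ being finite together with the finite support of each component of a $\Z$-map; this is precisely the content of Definitions~\ref{d:piecewise} and~\ref{definition:zeta}.
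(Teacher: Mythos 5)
Your proposal is correct and follows essentially the same route as the paper's proof: McNaughton's theorem supplies the dictionary between defining terms and piecewise linear integer maps (via the identity $\bar p(t)=t(p)$), and the finite-support bookkeeping with $I'=\bigcup_{j\in J'}I_j$ handles the passage between \emph{(ii)} and \emph{(iii)}. The only difference is organisational --- you prove \emph{(i)}$\Leftrightarrow$\emph{(ii)} and \emph{(ii)}$\Leftrightarrow$\emph{(iii)} where the paper runs the cycle \emph{(i)}$\Rightarrow$\emph{(ii)}$\Rightarrow$\emph{(iii)}$\Rightarrow$\emph{(i)} --- which changes nothing of substance.
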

\begin{proof}
\eqref{thm:def-maps:item1} $\Rightarrow$ \eqref{thm:def-maps:item2}.  If $\eta$ is definable then there exists a family of defining terms $t_{j}$ such that for each $p\in P$, $(\eta( p ))(j) = \bar{p}(t_j)$ for $j\in J$.  By Theorem \ref{t:McNaughton}, each term $t_{j}$ corresponds to a piecewise linear map with integer coefficients $\tau_{j}$, such that $t_{j}({x})=\tau_{j}({x})$.  Hence $\eta$ is a $\Z$-map.\\
\eqref{thm:def-maps:item2} $\Rightarrow$ \eqref{thm:def-maps:item3}.  Suppose $\eta\colon P\seq \I^{I}\to Q\seq \I^{J}$ is a $\Z$-map, then by Definition \ref{definition:zeta}, there is a family $\{\xi_{j}\}_{j\in J}$ of piecewise maps with integer coefficients such that for any $p\in P$, $\eta(p)=\big(\xi_{j}(p)\big)_{j\in J}$.  Let $J'$ be a finite subset of $J$.  By Definition \ref{d:piecewise}, for each $j\in J'$ there exists a finite subset $I_{j}\seq I$ and a piecewise linear map $\xi'_{j}\colon [0,1]^{I_{j}}\to [0,1]$, such that $\xi_{j}=\xi'_{j}\circ \pi^{I}_{I_{j}}$.  Now let $I':=\bigcup_{j\in J'} I_{j}$ and define $\xi''_{j}=\xi'_{j}\circ\pi^{I'}_{I_{j}}$.  Then
\[\pi^{J}_{J'}\circ\eta=(\xi_{j})_{j\in J'}=(\xi'_{j}\circ\pi^{I}_{I_{j}})_{j\in J'}=(\xi'_{j}\circ\pi^{I'}_{I_{j}})_{j\in J'}\circ\pi^{I}_{I'}=(\xi''_{j})_{j\in J'}\circ\pi^{I}_{I'}.\]
\eqref{thm:def-maps:item3} $\Rightarrow$ \eqref{thm:def-maps:item1}. Let us pick $J'$ in \eqref{thm:def-maps:item3} to be a singleton set $J':=\{j'\}$, then there exists a finite $I'$ and a $\Z$-map $\xi_{j'}\colon \I^{I'}\to\I$ such that $\pi_{j'}\circ\eta=\xi\circ\pi^{I}_{I'}$.  Let $t_{j'}$ be the term associated to the map $\xi$ by Theorem \ref{t:McNaughton}.  Then the last equality is equivalent to say that for any $p=(p_{i})_{i\in I}\in P$, $\big(\eta(p)\big)_{j'}=t_{j'}\big((p_{i})_{i\in I'}\big)$. Finally, by applying the same reasoning to every $j\in J$, we obtain a family of terms $t_{j}$ that define $\eta$.
\end{proof}

\begin{remark}\label{r:inf-pol}
Notice that the equivalence to item \eqref{thm:def-maps:item3} in the above theorem has a natural algebraic interpretation. Indeed, using the duality of \cite{BusCabMun201X}, the dual of  homomorphisms between finitely generated algebras are $\Z$-maps (between subset of finite dimensional cubes). Now,  if $\A$ and $\B$ are two MV-algebras and $h\colon \A\to \B$ is a map between them, then by general universal algebra, $h$ is a homomorphism if, and only if, for each finitely generated subalgebra $\C$ of $\A$, there exists a finitely generated subalgebra $\D$ of $\B$  such that $h$ restricted to $\C$ is a homomorphism into $\D$.  
\end{remark}

\begin{remark}
In the rest of the paper, we will only work with \emph{closed} subspaces of $[0,1]^{I}$ for an arbitrary set $I$.  To fix the notation, let us indicate by $\TS$ the full subcategory of $\TSO$ whose objects are closed spaces.  It is readily seen that the functor $\FV$ actually ranges in $\TS$, so in the rest of the paper we will freely think of it as a functor from $\MVp$ into $\TS$ without further notice.
\end{remark}

%%%%%%%%%%%%%%%%%%%%%%%%%%%
\subsection{From MV-algebras to Tychonoff spaces}

%%%%%%%%%%%%%%%%%%%%%%%%%%%

The functor $\FV$, of Section~\ref{sec:adjoint} is defined on each MV-algebra $\A$ by means of a presentation $(\FMV(X),S)$. This choice is justified by the authors of \cite{MarSpa2013} by the following:
\begin{quote}
`` \ldots we know of no way of associating to an abstract MV-algebra its dual object, as constructed
in this paper, other than by arbitrarily choosing a presentation of
the algebra.''
\end{quote}

In this section we give an alternative description of the dual adjunction between MV-algebras and Tychonoff spaces that is independent of the presentation. 
Our functor is based on a contravariant representable functor (in the sense of \cite[Chapter III.2]{McL1998}) and hinges on the general theory of natural dualities (as it applies to prevarieties generated by an infinite algebra \cite{piggy,NatDualChap}). One first presents a topology $\tau$ for the generating algebra $\G$, that is compact Hausdorff and such that all internal operations are $\tau$-continuous, then one considers as dual space of an algebra $\A$ the topological space $\Hom(\A,\G)$ seen as subspace of $\G^{A}$.

Following the usual notation in natural dualities, let  $\uline{\I}$ denote the standard MV-algebra on $[0,1]$ and $\twiddle{\I}$ denote the topological subspace of the real numbers with the Euclidean topology. 

\begin{definition}[The functor $\FW$]\label{d:functor-W}
Let $\FW\colon \MV\to\TS$ be the assignment  defined by:
\begin{align*}
&\text{on objects:} & \FW( \A)= \Hom(\A,\uline{\I}),
%\end{align*}
\intertext{and, if $h\colon \A\to \B$ is an MV-homomorphism and $g\in \FW( \B)$,}
%\begin{align*}
&\text{on morphisms:}  & \bigl(\FW(h)\bigr)(g)=g \circ h,
\end{align*}
where  $\Hom(\A,\uline{\I})$  is seen as a subspace of $\twiddle{\I}^{\A}$. 
\end{definition}
\begin{remark}\label{Max-W-homeo}
Observe that there is a bijective correspondence from $\Hom(\A,\uline{\I})$ into $\MAX(\A)$ (the set of maximal ideals of $\A$): it is given by $f\mapsto \{a\in A\mid f(a)=0\}$.  If one endows $\MAX(\A)$ with the topology generated by the closed elements of the form $V_{f}=\{M\in\MAX(\A)\mid f\in M\}$, then the above correspondence turns out to be a homeomorphism. 
\end{remark}
\begin{theorem}\label{Theo:RepFunctor}
For each $\A\in \MV$ and each MV-homomorphism $h$,
\begin{enumerate}
\item\label{Theo:RepFunctor:item1} $\FW(\A)$ is a closed subset of $\twiddle{\I}^{\A}$, and
\item\label{Theo:RepFunctor:item2} $\FW(h)$ is a well-defined continuous map.
\end{enumerate}
So, $\FW$ is a well-defined functor.
\end{theorem}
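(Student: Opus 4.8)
The plan is to establish items~\eqref{Theo:RepFunctor:item1} and~\eqref{Theo:RepFunctor:item2} separately, after which the functoriality of $\FW$ is immediate. For~\eqref{Theo:RepFunctor:item1} I would display $\FW(\A)=\Hom(\A,\uline{\I})$ as an intersection of closed subsets of $\twiddle{\I}^{\A}$. The decisive point is that the operations of $\uline{\I}$ are continuous for the Euclidean topology of $\twiddle{\I}$---this is precisely the ``compact Hausdorff space with $\tau$-continuous operations'' hypothesis underlying the theory of natural dualities. Indeed, a function $f\in\twiddle{\I}^{\A}$ is an MV-homomorphism exactly when $f(0)=0$, $f(a\oplus b)=f(a)\oplus f(b)$ for all $a,b\in A$, and $f(\neg a)=\neg f(a)$ for all $a\in A$. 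For fixed $a,b\in A$ the assignment $f\mapsto\big(f(a\oplus b),\,f(a)\oplus f(b)\big)$ is a continuous map $\twiddle{\I}^{\A}\to\twiddle{\I}\times\twiddle{\I}$, being built from coordinate projections and the continuous operation $\oplus$; hence the locus where $f(a\oplus b)=f(a)\oplus f(b)$ is the preimage of the closed diagonal, and so is closed. The loci cut out by the remaining conditions are closed for the same reason, and $\Hom(\A,\uline{\I})$ is their intersection, hence closed.

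For~\eqref{Theo:RepFunctor:item2}, well-definedness is immediate: given $g\in\FW(\B)$, the map $\FW(h)(g)=g\circ h$ is a composite of MV-homomorphisms, hence an MV-homomorphism, so it belongs to $\FW(\A)$. To check that $\FW(h)$ is a morphism of $\TS$, that is a definable (equivalently, by Theorem~\ref{thm:def-maps}, a $\Z$-) map, I would exhibit defining terms explicitly. Regarding $\FW(\B)\seq\twiddle{\I}^{\B}$ and $\FW(\A)\seq\twiddle{\I}^{\A}$, for each $a\in A$ I take $t_{a}\in\FMV(B)$ to be the free generator of $\FMV(B)$ indexed by $h(a)\in B$. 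Then for every $g\in\FW(\B)$, regarded as the point $(g(b))_{b\in B}$ with canonical extension $\bar{g}\colon\FMV(B)\to[0,1]$, the $a$-th coordinate of its image is
\[\big(\FW(h)(g)\big)(a)=(g\circ h)(a)=g\big(h(a)\big)=\bar{g}(t_{a}),\]
which is exactly the condition defining a definable map. Note that the $t_{a}$ are simply free generators, so no presentation of $\A$ or $\B$ is needed---this is the presentation-free description the section is after.

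Continuity of $\FW(h)$ is then automatic: by Theorem~\ref{thm:def-maps} it is a $\Z$-map, and $\Z$-maps are continuous since their coordinates are piecewise linear. Alternatively one sees it directly, as $\FW(h)$ is the restriction to $\FW(\B)$ of the coordinate map $\twiddle{\I}^{\B}\to\twiddle{\I}^{\A}$ sending $(x_{b})_{b\in B}$ to $(x_{h(a)})_{a\in A}$, whose composite with each projection $\pi_{a}$ is the projection $\pi_{h(a)}$, hence continuous by the universal property of the product topology. Finally $\FW$ preserves identities and reverses composition, since $\FW(\mathrm{id}_{\A})(g)=g$ and $\FW(h_{1}\circ h_{2})(g)=g\circ h_{1}\circ h_{2}=\FW(h_{2})\big(\FW(h_{1})(g)\big)$ for composable $h_{1},h_{2}$; therefore $\FW$ is a well-defined (contravariant) functor $\MV\to\TS$. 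I expect the only genuinely substantive step to be~\eqref{Theo:RepFunctor:item1}: everything there rests on the continuity of the MV-operations in the Euclidean topology, for without it the defining equations of a homomorphism would not carve out closed sets and $\FW(\A)$ could fail to be an object of $\TS$. By contrast, the definability in~\eqref{Theo:RepFunctor:item2} is conceptually transparent, being witnessed by the free generators indexed by the values of $h$.
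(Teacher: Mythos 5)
Your proposal is correct, but it takes a partly different route from the paper. For item \eqref{Theo:RepFunctor:item1} the paper does not argue via closed equational loci at all: it invokes Remark \ref{Max-W-homeo} to identify $\FW(\A)$ with the maximal spectrum $\MAX(\A)$, imports the compactness of $\MAX(\A)$ from \cite[Proposition 4.15]{Mun2011}, and concludes that a compact subset of the Hausdorff space $[0,1]^{\A}$ is closed. Your argument --- writing $\Hom(\A,\uline{\I})$ as an intersection of preimages of the closed diagonal under maps built from projections and the continuous operations of $\uline{\I}$ --- is essentially the ``natural dualities'' argument that the paper itself supplies in the remark immediately following the theorem, precisely because the compactness proof hides the role of the continuity of the operations. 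So your choice is more self-contained and more in the spirit of the section, at the cost of not recording the (useful elsewhere) identification with $\MAX(\A)$. For item \eqref{Theo:RepFunctor:item2} the paper only checks well-definedness and continuity, via the identity $\pi_a\circ\FW(h)=\pi_{h(a)}$, which you also give; but you additionally exhibit the defining terms $t_a$ (the free generators of $\F(B)$ indexed by $h(a)$), thereby verifying that $\FW(h)$ is a definable map, i.e.\ an actual morphism of $\TS$. Since the morphisms of $\TS$ are definable maps and not merely continuous ones, this extra step is what the concluding claim ``$\FW$ is a well-defined functor'' really needs, so your proof is, if anything, slightly more complete than the paper's on this point.
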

\begin{proof}
By Remark \ref{Max-W-homeo}, we obtain the compactness of $\FW(\A)$ immediately from the compactness of $\MAX(\A)$ (for the compactness of $\MAX(\A)$ see e.g., \cite[Proposition 4.15]{Mun2011}). Since $[0,1]^{\A}$ is Hausdorff, \eqref{Theo:RepFunctor:item1} follows. 

To prove \eqref{Theo:RepFunctor:item2}, let $h\colon \A\to \B$ be an MV-homomorphism. The fact the $\FW(h)\colon\FW(\B)\to \FW(\A)$ is a well-defined map, follows directly from the fact that $g\circ h\colon \A\to \uline{\I}$ is an MV-homomorphism for each $g\in\Hom(\B,\uline{\I})$.

To see that it is continuous it is enough to prove that $\pi_a\circ \FW(h) $ is continuous for each $a\in \A$. It is easy to see that $\pi_a\circ \FW(h)=\pi_{h(a)}$ (see Fig.~\ref{Fig:FW}).
\begin{figure}[h!]
\centering
\begin{tikzpicture} 
[auto,
 text depth=0.25ex,
 move up/.style=   {transform canvas={yshift=1.9pt}},
 move down/.style= {transform canvas={yshift=-1.9pt}},
 move left/.style= {transform canvas={xshift=-2.5pt}},
 move right/.style={transform canvas={xshift=2.5pt}}] 
\matrix[row sep= .7cm, column sep= 1cm]  
{ 
\node (A) {$\A$};  & \node (B) {$\B$}; & & \node (VB) {$\FW(\B)$}; & & \node (VA) {$\FW(\A)$};\\
  &  & & \node (IB) {$\twiddle{\I}^{B}$}; & & \node (IA) {$\twiddle{\I}^{A}$};\\
  & \node (I) {$\uline{\I}$};& &  & \node (I2) {$\twiddle{\I}$};& \\
  };
\draw  [->] (A) to node  {$h$}(B);
\draw [->] (A) to node [swap]  {$\FW(h)(g)$}(I);
\draw [->] (B) to node  {$g$}(I);
\draw  [->] (VB) to node  {$\FW(h)$}(VA);
\draw  [right hook->] (VB) to node [swap]  {}(IB);
\draw [right hook->] (VA) to node  {}(IA);
\draw [->] (IA) to node [swap]  {$\pi_a$}(I2);
\draw [->] (IB) to node  {$\pi_{h(a)}$}(I2);
\end{tikzpicture}
\caption{}\label{Fig:FW}
\end{figure}
Indeed,  for each $g\in \FW(\B)$, 
\[(\pi_a\circ \FW(h))(g)=\pi_a(g\circ h)=(g\circ h)(a)=g(h(a))=\pi_{h(a)}(g).\]
Then $\pi_a\circ \FW(h) $ is continuous for each $a\in \A$, and so is $\FW(h)$.
\end{proof}
\begin{remark}
Unfortunately, in item \eqref{Theo:RepFunctor:item1} of the above proof, the role played by the continuity of the operations in $\uline{\I}$ is hidden. However, in the setting of natural dualities, this property is central to establish that the functors considered are well-definite (see \cite[Exercise 2.9]{NatDual88}). Therefore, in the spirit of this section, we show how to prove that $\Hom(\A,\uline{\I})$  is a closed subset of $\twiddle{\I}^{\A}$ using the general argument of natural dualities.

We show that $\Hom(\A,\uline{\I})$ contains all its limit points.  To this end, pick any $f\not\in \Hom(\A,\uline{\I})$. Trivially, one of the following conditions must hold:
\begin{enumerate}[(i)]
\item\label{item:1} $f(0)\neq 0$.
\item\label{item:2} There exists $a\in A$, with $f(\neg a)\neq \neg f(a)$.
\item\label{item:3} There exist $a,b\in A$, with $f(a\oplus b)\neq f(a)\oplus f(b)$.
\end{enumerate}
Assume \eqref{item:3} is the case. Since $\twiddle{\I}$ is Hausdorff, there exist open sets $U,V\seq \I$ such that $f(a\oplus b)\in U$, $ f(a)\oplus f(b)\in V$ and $U\cap V=\emptyset$. Since $\oplus\colon \I^2\to\I$ is continuous the inverse image of $V$ is open in $[0,1]^{2}$.  In other words there exist open sets  $V_a,V_b\seq \I$ such that
\begin{align}
\label{Eq:OpenOplus}x\oplus y\in V\mbox{ if, and only if, }(x,y)\in V_a\times V_b.
\end{align}
Notice that, since $f(a)\oplus f(b)\in V$ one has $f(a)\in V_a$, $f(b)\in V_b$.
 
For each $c\in A$, let $\pi_{c}\colon \I^{\A}\to[0,1]$ be defined as the evaluation $\pi_{c}(h)=h(c)$, for $h\in [0,1]^{A}$. Consider also the set
\[O=\pi_a^{-1}(V_a)\cap \pi_b^{-1}(V_b)\cap \pi_{a\oplus b}^{-1}(U),\]
which is open because the evaluations are continuous maps.

From $f(a)\in V_a$, $f(b)\in V_b$ and $f(a\oplus b)\in U$, one immediately has $f\in O$. On the other hand,  for any $g\in O$, one has that $g(a\oplus b)\in U$ and, by \eqref{Eq:OpenOplus}, that $g(a)\oplus g(b)\in V$. Since  $U\cap V=\emptyset$, $g(a\oplus b)$ must differ from $g(a)\oplus g(b)$, hence, $g\notin  \Hom(\A,\uline{\I})$. Thus, $O$ and  $\Hom(\A,\uline{\I})$ are disjoint.  Therefore, $f$ is not a limit point of $\Hom(\A,\uline{\I})\subseteq \twiddle{\I}^{A}$ and the claim in proved. The cases \eqref{item:1} and \eqref{item:2} can be dealt using similar arguments. 
\end{remark}

\begin{theorem}\label{Theo:DualNat}
Let the functors $\FST$, $\FV$ and $\FW$ be defined as in Definitions \ref{d:structure-functor}, \ref{d:functors-V-and-I}, and \ref{d:functor-W}, respectively.  Then $\FW=\FV\circ\FST$, so for each choice functor $\FC\colon\MV\to\MVp$,  the functor $\FW$ is naturally equivalent to the functor $\FV\circ \FC$.
\end{theorem}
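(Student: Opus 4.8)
The plan is to prove the \emph{equality} $\FW=\FV\circ\FST$ outright, separately on objects and on morphisms, and then to read off the natural equivalence with $\FV\circ\FC$ from two facts already recorded in Section~\ref{Sec:Preliminaries}: that $\FST$ is a choice functor, and that any two choice functors are naturally isomorphic. Since $\FST$ is covariant from $\MV$ to $\MVp$ and $\FV$ is contravariant into $\TS$, the composite $\FV\circ\FST$ is contravariant, matching the variance of $\FW$, so at least the shapes agree.

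First I would check the equality on objects. By Definitions~\ref{d:structure-functor} and~\ref{d:functors-V-and-I} we have $(\FV\circ\FST)(\A)=\FV(A,\theta_{\A})=\mathbb{V}(\theta_{\A})\seq[0,1]^{A}$, whereas $\FW(\A)=\Hom(\A,\uline{\I})$ regarded as a subset of $\twiddle{\I}^{A}=[0,1]^{A}$. The core of this step is to identify these two subsets. A point $p\in[0,1]^{A}$ is a function $p\colon A\to[0,1]$, and $p\in\mathbb{V}(\theta_{\A})$ means precisely $\theta_{\A}\seq\ker(\bar p)$. Since $\theta_{\A}=\ker(\st_{\A})$, this inclusion holds if and only if $\bar p$ factors through the quotient $\FMV(A)/\theta_{\A}$; composing the induced map with the isomorphism $\iota_{\A}\colon\A\to\FMV(A)/\theta_{\A}$ yields an MV-homomorphism $\A\to\uline{\I}$ whose underlying function on $A$ is $p$ itself. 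Conversely, any $g\in\Hom(\A,\uline{\I})$, viewed as a function $A\to[0,1]$, satisfies $\bar g=g\circ\st_{\A}$ because both sides are homomorphisms agreeing on the free generators $A$, whence $\theta_{\A}\seq\ker(\bar g)$. Thus $\mathbb{V}(\theta_{\A})=\Hom(\A,\uline{\I})$ as subsets of $[0,1]^{A}$, and as both carry the subspace topology from $[0,1]^{A}=\twiddle{\I}^{A}$, they coincide as objects of $\TS$.

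Next I would verify the equality on morphisms. Fix an MV-homomorphism $h\colon\A\to\B$. Unwinding $\FST(h)=\iota_{\B}\circ h\circ\iota_{\A}^{-1}$ shows that it sends $[a]_{\theta_{\A}}$ to $[h(a)]_{\theta_{\B}}$ for every $a\in A$, so in the morphism clause of Definition~\ref{d:functors-V-and-I} one may take as representative $t_{a}:=h(a)\in\FMV(B)$, the free generator indexed by the element $h(a)\in B$. Then, for $p\in\mathbb{V}(\theta_{\B})$ and $a\in A$,
\[
\bigl((\FV\circ\FST)(h)(p)\bigr)(a)=\bar p(t_{a})=\bar p(h(a))=p(h(a))=(p\circ h)(a)=\bigl(\FW(h)(p)\bigr)(a),
\]
the middle equality holding because $h(a)$ is a free generator. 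This gives $\FW(h)=(\FV\circ\FST)(h)$, and together with the object computation it establishes $\FW=\FV\circ\FST$. For the final clause, let $\FC\colon\MV\to\MVp$ be any choice functor; since $\FST$ is also a choice functor and all choice functors are naturally isomorphic, fix a natural isomorphism $\alpha\colon\FST\Rightarrow\FC$. Whiskering with $\FV$ produces a natural isomorphism $\FV\alpha\colon\FV\circ\FST\Rightarrow\FV\circ\FC$ (functors preserve isomorphisms and respect naturality), and as $\FW=\FV\circ\FST$ we conclude $\FW\cong\FV\circ\FC$.

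I expect the main obstacle to be bookkeeping rather than any genuine difficulty: one must keep straight three overlapping identifications at once — elements of $\A$ (resp.\ $\B$) as free generators of $\FMV(A)$ (resp.\ $\FMV(B)$), a point $p\in[0,1]^{A}$ as a function versus the homomorphism $\bar p$ it induces, and the freedom in the choice of representative $t_{a}$ in the definition of $\FV$ — and then confirm that the natural choice $t_{a}=h(a)$ makes the morphism clause of $\FV$ collapse to honest precomposition $g\mapsto g\circ h$.
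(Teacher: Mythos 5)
Your proposal is correct and follows essentially the same route as the paper's own proof: the object-level identification of $\mathbb{V}(\theta_{\A})$ with $\Hom(\A,\uline{\I})$ via the factorisation of $\bar p$ through $\st_{\A}$, the morphism-level computation using the representative $h(a)\in[h(a)]_{\theta_{\B}}$ to collapse $\FV(\FST(h))$ to precomposition with $h$, and the final whiskering argument from the natural isomorphism of choice functors. No substantive differences to report.
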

\begin{proof}

We start by proving that $\FW=\FV\circ \FST$. Given $p\in \I^{A}$, recall that $\bar{p}\colon \F(A)\to\uline{\I}$ is the unique MV-homomorphism that extends $p$. It follows form general algebra that a homomorphism from $h\colon \F(A)\to\uline{\I}$ factors through $\st_{\A}$ if and only if $\ker(h)\supseteq \ker(\st_{\A})=\theta_{\A}$ (see Fig.\ \ref{Fig}).

\begin{figure}[h!]
\centering
\begin{tikzpicture} 
[auto,
 text depth=0.25ex,
 move up/.style=   {transform canvas={yshift=1.9pt}},
 move down/.style= {transform canvas={yshift=-1.9pt}},
 move left/.style= {transform canvas={xshift=-2.5pt}},
 move right/.style={transform canvas={xshift=2.5pt}}] 
\matrix[row sep= 1cm, column sep= 1.3cm]  
{ 
\node (AS) {$\F(A)$}; & \node (I) {$\uline{\I}$}; & & \node (FA2) {$\F(A)$}; &  \node (I2) {$\uline{\I}$}; \\
   & \node (FA) {$A$}; & & \node(Q) {$\F(A)/\theta_{\A}$}; & \node (AA) {$\A$}; \\ 
};
\draw  [->] (AS) to node  {$\bar{p}$}(I);
\draw [->] (FA) to node [swap]  {${p}$}(I);
\draw [->] (AS) to node [below] {$\st_{\A}$} (FA);
\draw  [->] (FA2) to node  {$h$}(I2);
\draw  [->] (FA2) to node [swap]  {$[\_]_{\theta_{\A}}$}(Q);
\draw [->] (Q) to node [swap]  {$\iota_{\A}^{-1}$}(AA);
\draw [->] (AA) to node [swap]  {$h'$}(I2);
\draw [->] (FA2) to node  {$\st_{\A}$}(AA);
\end{tikzpicture}
\caption{}\label{Fig}
\end{figure}

 Thus, for each MV-algebra $\A$
\begin{equation}
p\in \Hom(\A,\uline{\I}) \mbox{ iff }\ker(\bar{p})\supseteq\theta_{\A}.
\end{equation}
Therefore,
\begin{align*}
\FV\circ \FST(\A)&=\mathbb{V}(\theta_{\A})=\{p\in\I^{A}\mid s(p)=t(p), \mbox{ for each }(s,t)\in\theta_{\A} \}\\
&=\{p\in\I^{A}\mid \bar{p}(s)=\bar{p}(t), \mbox{ for each }(s,t)\in\theta_{\A}\}\\
&=\{p\in\I^{A}\mid \ker(\bar{p})\supseteq \theta_{\A}\}\\
&=\Hom(\A,\uline{\I}).
\end{align*}

Recall that $\FV$ sends a homomorphism $f\colon \F(I)/\theta\to\F(J)/\theta'$ into the function 
\begin{align}
\label{eq:func-V}\FV(f)(p)=(f_x(p))_{x\in \mu}=(\bar{p}(f_i))_{i\in I}\text{ for each }p\in \I^{J},
\end{align}
where $f_{i}\in f([i]_{\theta})$ is arbitrarily chosen for each $i\in I$.
Now let $h\colon \A\to \B$ be an MV-homomorphism. For each $a\in A$, 
\begin{align}
\label{eq:func-S}\FST(h)([a]_{\theta_{\A}})=\iota_{\B}\circ h\circ\iota_{\A}^{-1}([a]_{\theta_{\A}})=\iota_{\B}(h(a))=[h(a)]_{\theta_{\B}}.
\end{align}
Then, fixing an arbitrary $f_a\in[h(a)]_{\theta_{B}}$ for each $a\in A$, it follows from \eqref{eq:func-V} and \eqref{eq:func-S}
\begin{align*}
((\FV\circ\FST)(h)) (p)&=\left(\FV(\FST(h))\right) (p)=(\bar{p}(f_a))_{a\in A}=(\bar{p}(h(a)))_{a\in A}\\
&= (p(h(a))_{a\in A}
=((p\circ h)(a)))_{a\in A}= p\circ h,
\end{align*}
for each $p\in \mathbb{V}(\theta_{\B})=\Hom(\B,\uline{\I})$. Which finishes the proof that $\FV\circ \FST=\FW$.  As an immediate consequence, we obtain that $\FW$ is naturally isomorphic to $\FV\circ \FC$ for any  choice functor $\FC\colon \MV\to\MVp$. 
\end{proof}

\begin{remark}
There is an advantage in working with presentations and the operator $\mathbb{V}$ rather than $\Hom$: the choice of a presentation allows more control on the dimension of the Tychonoff spaces associated with the algebras. E.g., when a semisimple MV-algebra $\A$ is generated by a finite set of elements $a_1,\ldots, a_m\in A$ then $\A\cong \F(m)/\theta$ for some $\theta$ congruence of $\F(m)$. Using  $\theta$ instead of $\theta_{\A}$ we have $\FV(\A)=\mathbb{V}(\theta)\seq \twiddle{[0,1]}^{m}$ as opposed to $\FW(\A)=\mathbb{V}(\theta_{\A})\seq \twiddle{[0,1]}^{A}$.
\end{remark}

\begin{remark}\label{Rem}
 It is easy and useful to explicitly see how the natural isomorphism from $\FV\circ\FC$ to $\FW$ works for any choice functor.  
Let us fix a choice functor $\FC\colon\MV\to\MVp$.  By Definition \ref{d:choice-functor}, the pair $\FC,\Q$ is an equivalence of categories, thus let $\rho\colon {\rm Id}_{\MV}\to \Q\circ\FC$ be its unit. Recalling (Definition \ref{d:structure-functor}) that $\iota_{\A}$ are the components of a natural isomorphism from ${\rm Id}_{\MV}$ to $\Q\circ\FST$. Then the assignment $\A\mapsto \FV(\FC(\iota_{\A}\circ\rho_{\A}^{-1})$ is the natural isomorphism from   $\FV\circ\FC$ to $\FW$. (See Fig.~\ref{Fig:NatIso}.)
\begin{figure}[h!]
\centering
\begin{tikzpicture} 
[auto,
 text depth=0.25ex,
 move up/.style=   {transform canvas={yshift=1.9pt}},
 move down/.style= {transform canvas={yshift=-1.9pt}},
 move left/.style= {transform canvas={xshift=-2.5pt}},
 move right/.style={transform canvas={xshift=2.5pt}}] 
\matrix[row sep= 1cm, column sep= .8cm]  
{ 
\node (FUT) {$\F(U)/\theta$}; & \node (FCA) {$\FC(\A)=(\F(U),\theta)$}; &  \node (VU) {$\FV(\FC(\A))=\mathbb{V}(\theta)$};\\
\node (A) {$\A$}; & \\
\node(FAT) {$\F(A)/\theta_{\A}$}; & \node (FSTA) {$\FST(\A)=(\F(A),\theta_{\A})$}; & \node (VA) {$\FV(\FST(\A))=\mathbb{V}(\theta_{\A})\cong\Hom(\A,\uline{\I})=\FW(\A)$};\\
};
\draw [->] (FUT) to node  {$\rho_{\A}^{-1}$}(A);
\draw  [->] (A) to node  {$\iota_{\A}$}(FAT);
\draw [<-] (FCA) to node [right]  {$\FC(\iota_{\A}\circ\rho_{\A}^{-1})$}(FSTA);
\draw [<-] (VU) to node {$\FV(\FC(\iota_{\A}\circ\rho_{\A}^{-1}))$}(VA);
\end{tikzpicture}
\caption{}\label{Fig:NatIso}
\end{figure}

We now prove a simple but useful lemma that describes how $\FW$ operates on subalgebras generated by a finite set of generators.
\end{remark}
\begin{lemma}\label{Lem:useful}
Let $\A$ be a semisimple MV-algebra, let $C\seq A$ and let $\B$ the subalgebra of $\A$ generated by $C$. The dual of $\B$, $\FW(\B)$ is $\Z$-homeomorphic to $\pi^{A}_{C}(\FW(\A))$.
\end{lemma}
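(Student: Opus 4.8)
The plan is to prove the stronger set-theoretic equality $\pi^{A}_{C}(\FW(\A))=\pi^{B}_{C}(\FW(\B))$ inside $\twiddle{\I}^{C}$, and then to show separately that the projection $\pi^{B}_{C}$ restricts to a $\Z$-homeomorphism $\FW(\B)\to\pi^{B}_{C}(\FW(\B))$; composing these two facts gives the statement, since $\Z$-homeomorphisms are exactly the isomorphisms picked out by Theorem~\ref{thm:def-maps}. Throughout I would use that $\B$, as a subalgebra of the semisimple $\A$, is itself semisimple (because $\MVS=\mathbb{ISP}(\uline{\I})$ is closed under subalgebras), and that $\B$ is presented by $(\F(C),\ker\st_{C})$, that is $\B\cong\F(C)/\ker(\st_{C})$, where $\st_{C}\colon\F(C)\to\A$ is the structure map extending the inclusion $C\hookrightarrow A$ and thus has image $\langle C\rangle=\B$.

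For the set equality I would first identify $\pi^{B}_{C}(\FW(\B))$ with the value set $\mathbb{V}(\ker\st_{C})$: a point $p\in\twiddle{\I}^{C}$ lies in $\mathbb{V}(\ker\st_{C})$ iff its unique extension $\bar{p}\colon\F(C)\to\uline{\I}$ kills $\ker\st_{C}$, iff $\bar p$ factors through $\B=\F(C)/\ker\st_{C}$ as a homomorphism $g'$ with $g'(c)=p(c)$, which sets up a bijection sending $p$ to $\pi^{B}_{C}(g')$. The core computation is then the $\mathbb{I}$-image of $\pi^{A}_{C}(\FW(\A))$. Since $\FW(\A)$ is closed in the compact cube $\twiddle{\I}^{A}$ (Theorem~\ref{Theo:RepFunctor}) and $\pi^{A}_{C}$ is continuous, $\pi^{A}_{C}(\FW(\A))$ is compact, hence closed. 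For $(s,t)\in\F(C)^{2}$ one has $\overline{g|_{C}}=g\circ\st_{C}$, so $(s,t)\in\mathbb{I}(\pi^{A}_{C}(\FW(\A)))$ iff $g(\st_{C}(s))=g(\st_{C}(t))$ for every $g\in\FW(\A)=\Hom(\A,\uline{\I})$; because $\A$ is semisimple its homomorphisms into $\uline{\I}$ separate points, so this holds iff $\st_{C}(s)=\st_{C}(t)$, i.e. iff $(s,t)\in\ker\st_{C}$. Hence $\mathbb{I}(\pi^{A}_{C}(\FW(\A)))=\ker\st_{C}$. Applying $\mathbb{V}$ and the Galois identity $\mathbb{V}\mathbb{I}(P)=\mathrm{cl}(P)$ yields $\mathrm{cl}(\pi^{A}_{C}(\FW(\A)))=\mathbb{V}(\ker\st_{C})$; the left-hand side is already closed and the right-hand side equals $\pi^{B}_{C}(\FW(\B))$, so $\pi^{A}_{C}(\FW(\A))=\pi^{B}_{C}(\FW(\B))$.

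It then remains to check that $\Psi:=\pi^{B}_{C}|_{\FW(\B)}\colon\FW(\B)\to\pi^{B}_{C}(\FW(\B))$ is a $\Z$-homeomorphism. It is surjective by definition and injective because a homomorphism on $\B$ is determined by its values on the generating set $C$. Its components are the evaluations $g'\mapsto g'(c)$ for $c\in C$, i.e. coordinate projections, so $\Psi$ is a $\Z$-map; conversely, every $b\in B$ is a term $t(c_{1},\dots,c_{k})$ in finitely many elements of $C$, so the $b$-component of $\Psi^{-1}$ is $p\mapsto t^{\uline{\I}}(p(c_{1}),\dots,p(c_{k}))$, a piecewise linear map with integer coefficients depending on finitely many coordinates, whence $\Psi^{-1}$ is a $\Z$-map as well. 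Combining this with the previous paragraph gives $\FW(\B)\cong_{\Z}\pi^{B}_{C}(\FW(\B))=\pi^{A}_{C}(\FW(\A))$, as required.

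The main obstacle is the reverse inclusion $\pi^{B}_{C}(\FW(\B))\subseteq\pi^{A}_{C}(\FW(\A))$, which amounts to extending every homomorphism $\B\to\uline{\I}$ to a homomorphism $\A\to\uline{\I}$ agreeing on $C$; I would deliberately avoid proving such an extension result head-on, since it is delicate, and instead obtain it indirectly by showing the two closed sets have the same $\mathbb{I}$-image and invoking $\mathbb{V}\mathbb{I}=\mathrm{cl}$. The only nontrivial ingredient that survives this reformulation is the point-separation property of $\A$, which is precisely where semisimplicity of $\A$ is used.
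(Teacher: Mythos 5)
Your proof is correct, but it follows a genuinely different route from the paper's. The paper argues functorially: it picks a choice functor with $\FC(\B)=(\FMV(C),\theta)$, transports the inclusion $\B\hookrightarrow\A$ to a homomorphism $f\colon\FMV(C)/\theta\to\FMV(A)/\theta_{\A}$ with $f([c]_{\theta})=[c]_{\theta_{\A}}$, computes from Definition~\ref{d:functors-V-and-I} that $\FV(f)$ is exactly $\pi^{A}_{C}$ restricted to $\mathbb{V}(\theta_{\A})$, and then invokes Theorem~\ref{Theo:DualNat} to identify $\FW(\B)$ with $\FV(\FC(\B))$; the fact that $\pi^{A}_{C}$ maps $\FW(\A)$ \emph{onto} $\mathbb{V}(\theta)$ is left implicit there (it comes from the semisimple duality, which turns the injective $f$ into a surjective dual map). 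You instead work directly with the Galois connection: you compute $\mathbb{I}\bigl(\pi^{A}_{C}(\FW(\A))\bigr)=\ker\st_{C}$ using the fact that homomorphisms of the semisimple algebra $\A$ into $\uline{\I}$ separate points (which is precisely where the hypothesis enters), apply $\mathbb{V}\mathbb{I}={\rm cl}$ together with compactness of the projected image, and identify $\mathbb{V}(\ker\st_{C})$ with $\pi^{B}_{C}(\FW(\B))$; the $\Z$-homeomorphism $\FW(\B)\cong\pi^{B}_{C}(\FW(\B))$ is then verified by hand via McNaughton's theorem. Your version is more elementary --- it bypasses the choice-functor bookkeeping and Theorem~\ref{Theo:DualNat} --- and has the merit of making fully explicit the one nontrivial point, namely why every homomorphism $\B\to\uline{\I}$ arises as the restriction of a point of $\FW(\A)$; the paper's version is shorter and exhibits the lemma as a literal instance of the dual functor applied to an inclusion. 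Both arguments use semisimplicity of $\A$ in essentially the same place, just packaged differently.
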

\begin{proof}
Since $\B$ is generated by $C$, there exists a choice functor $\FC$, such that  $\FC(\B)=(\FMV(C),\theta)$. It is easy to see that the map  $\rho_{\B}(c)=[c]_{\theta}$ for each element $c\in C$ extends to an isomorphism from $\B$ into $\FMV(C)/\theta$. Let ${\rm inc}\colon \B\to \A$ be the inclusion homomorphism, and set  $f= \iota_{\A}\circ{\rm inc}\circ \rho_{\B}^{-1}$.  Thus, $f\colon \FMV(C)/\theta\to\FMV(A)/\theta_{\A}$ is such that $f([c]_{\theta})=[c]_{\theta_{\A}}$. 
Now, to apply the functor $\FV$ of Definition \ref{d:functors-V-and-I} to $f$, we need to select an arbitrary element of $[c]_{\theta_{\A}}$, for each $c\in C$.  If we pick $c\in [c]_{\theta_{\A}}$ we can write
\begin{equation}\label{eq:proj}
\FV(f)\colon \mathbb{V}(\theta_{A})\to \mathbb{V}(\theta),\qquad \FV(f)(p)=(\bar{p}(c))_{c\in C}=\pi^{A}_C(p),
\end{equation}
for each $p\in \mathbb{V}(\theta_{\A})$.
Finally, by Theorem \ref{Theo:DualNat}, $\FW(\B)$ is $\Z$-homeomorphic to $\FV(\FC(\B))$ so we conclude that $\FW(\B)$ is $\Z$-homeomorphic to $\pi^{A}_{C}(\FW(\A))$.
\end{proof}

%%%%%%%%%%%%%%%%%%%%%%%%%%%
\subsection{From Tychonoff spaces to MV-algebras}

%%%%%%%%%%%%%%%%%%%%%%%%%%%

Extending the parallel between this representation and natural dualities we set
\begin{definition}[The functor $\McN$]\label{d:functor-M}
Let $\McN\colon \TS\to\MV$ be the assignment  defined by:
\begin{align*}
&\text{on objects:} & \McN( X)= \TS(X,{\twiddle{\I}}), \text{ for each }X\in \TS, 
%\end{align*}
\intertext{and if $\eta\in\TS( X, Y)\text{ and }\xi\in \McN(Y),$}
%\begin{align*}
&\text{on morphisms:} & \bigl(\TS(\eta)\bigr)(\xi)=\xi \circ \eta,
\end{align*}
where  $\TS(X,\twiddle{\I})$  is seen as a subalgebra of  $\uline{\I}^{X}$. 
\end{definition}
\begin{proposition}\label{p:adjunction}
The assignment $\McN$ is a well-defined contravariant functor. Moreover, for each MV-algebra $\A$, the map $\ev_{\A}\colon  \A\to \McN(\FW(\A))$ defined by $(\ev_{\A}(a))(f)=f(a)$ is an onto map and the assignment $\A\mapsto \ev_{\A}$ is a natural transformation from ${\rm Id}_{\MV}$ to $\McN\circ\FW$.
Similarly, $\cev_{X}\colon X\to \FW(\McN(X))$ defined by  $(\cev_{\A}(x))(g)=g(x)$ determines a natural transformation from  ${\rm Id}_{\TS}$ to $\FW\circ \McN$.
\end{proposition}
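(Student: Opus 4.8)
The plan is to verify the four assertions in the natural order, isolating the surjectivity of $\ev_{\A}$ as the only substantial point; everything else is checking that pointwise MV-structure is respected and that $\Z$-maps compose.

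\textbf{The functor $\McN$.} First I would check that $\McN(X)=\TS(X,\twiddle{\I})$ is a subalgebra of $\uline{\I}^{X}$. The constant map $0$ is a $\Z$-map, and since the operations $\oplus,\neg$ on $\uline{\I}$ are themselves piecewise linear with integer coefficients, the pointwise operations of $\uline{\I}^{X}$ send $\Z$-maps to $\Z$-maps: for instance $\xi\oplus\zeta$ is the composite of $(\xi,\zeta)\colon X\to\I^{2}$ with $\oplus\colon\I^{2}\to\I$, and $\Z$-maps are closed under composition, which is precisely what makes $\TS$ a category (via Theorem~\ref{thm:def-maps}). For morphisms, $\McN(\eta)(\xi)=\xi\circ\eta$ lands in $\McN(X)$ by that same closure, and it is an MV-homomorphism because precomposition commutes with the pointwise operations. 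Functoriality, $\McN(\mathrm{id})=\mathrm{id}$ and $\McN(\eta\circ\eta')=\McN(\eta')\circ\McN(\eta)$, is the standard contravariant behaviour of a hom-functor.

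\textbf{The map $\ev_{\A}$.} Each $\ev_{\A}(a)$ is the restriction to $\FW(\A)$ of the projection $\pi_{a}\colon\twiddle{\I}^{A}\to\twiddle{\I}$, which depends on a single coordinate with integer coefficient $1$, hence is a $\Z$-map and a legitimate element of $\McN(\FW(\A))$. It is a homomorphism because each $f\in\FW(\A)$ is one, so $\ev_{\A}(a\oplus b)(f)=f(a\oplus b)=f(a)\oplus f(b)$, and similarly for $\neg$ and $0$. The substantive step, and the main obstacle, is surjectivity, where I would invoke McNaughton's Theorem (Theorem~\ref{t:McNaughton}). Given $\xi\in\McN(\FW(\A))$, Definition~\ref{definition:zeta} together with Definition~\ref{d:piecewise} lets me write $\xi(f)=\tau\big(f(a_{1}),\dots,f(a_{n})\big)$ for finitely many $a_{1},\dots,a_{n}\in A$ and a piecewise linear integer map $\tau\colon\I^{n}\to\I$. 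By Theorem~\ref{t:McNaughton}, $\tau=t^{\uline{\I}}$ for some MV-term $t$; setting $a:=t^{\A}(a_{1},\dots,a_{n})\in A$ and using that every $f\in\FW(\A)$ is a homomorphism yields $f(a)=t^{\uline{\I}}\big(f(a_{1}),\dots,f(a_{n})\big)=\tau\big(f(a_{1}),\dots,f(a_{n})\big)=\xi(f)$ for all $f$, so $\ev_{\A}(a)=\xi$.

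\textbf{Naturality of $\ev$ and the dual map $\cev$.} For $h\colon\A\to\B$ I would chase the square directly: for $a\in A$ and $g\in\FW(\B)$, using $\FW(h)(g)=g\circ h$, both $\big(\McN(\FW(h))(\ev_{\A}(a))\big)(g)$ and $\big(\ev_{\B}(h(a))\big)(g)$ reduce to $g(h(a))$. For $\cev$, I would first observe that $\cev_{X}(x)$ is evaluation at $x$ on $\McN(X)\seq\uline{\I}^{X}$, hence an MV-homomorphism into $\uline{\I}$ and a point of $\FW(\McN(X))$; moreover $\cev_{X}$ is itself a $\Z$-map, since its coordinate at $g\in\McN(X)$ is the assignment $x\mapsto g(x)$, namely $g$, which is a $\Z$-map by construction. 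Naturality of $\cev$ is again a direct chase: for $\eta\colon X\to Y$, $x\in X$ and $\xi\in\McN(Y)$, both sides evaluate to $\xi(\eta(x))$. The only genuinely non-formal ingredient in the whole argument is the appeal to McNaughton's Theorem for the surjectivity of $\ev_{\A}$; all remaining verifications are routine.
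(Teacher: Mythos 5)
Your proposal is correct and follows essentially the same route as the paper: the paper also reduces everything to the identification of $\Z$-maps with definable maps (Theorem~\ref{thm:def-maps}), identifies $\ev_{\A}(a)$ with the projection $\pi^{A}_{a}$, and proves surjectivity by representing an element of $\McN(\FW(\A))$ by a term $t$ and taking $a=\st_{\A}(t)$ --- which is exactly your $a=t^{\A}(a_{1},\dots,a_{n})$ with the appeal to McNaughton's theorem made explicit rather than routed through the word ``definable''. The naturality chases are identical.
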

\begin{proof}
For any object $X$ in $\TS$ the image $\McN(X)$ is an MV-algebra, for by Theorem \ref{thm:def-maps}, $\Z$-maps coincide with definable maps, and it is readily seen that definable maps are closed under MV-operations.
Further, it is straightforward to check that $\McN(\eta)$ is an MV-homomorphism for each $\eta\in\TS( X, Y)$, hence $\McN$ is a well-defined functor. 

For $\A\in \MV$ we first prove that $\ev_{\A}$ is well defined. Indeed, if $a\in \A $, since $\FW(\A)\seq [0,1]^{A}$, we have that $\ev_{\A}(a)$ coincides with the projection $\pi^{A}_{a}$, that is trivially a $\Z$-map.  The fact that it is an MV-homomorphism follows directly from the definition of $\ev_{A}$ and the fact that elements of $\FW(\A)$ are MV-homomorphisms. 

To see that the association $\A\mapsto\ev_{\A}$ is natural in $\A$ consider an MV-homomorphism $h\colon\A\to\B$ and let $f\in \FW(\B)$.  Then 
\begin{align*}
\ev_{\B}(h(a))(f)&=f(h(a))=f\circ h (a)\\
&= \ev_{\A}(a)(f\circ h) \\
&=(\ev_{\A}(a)\circ \FW(h))(f)\\
&=\Big(\McN\FW(h)(\ev_{\A}(a))\Big)(f)
\end{align*}
for each $a\in\A$.

To prove that $\ev_{\A}$ is onto let $\eta\in\McN\FW(A)= \TS(\FW(\A),\twiddle{\I})$. 
Since by Theorem~\ref{thm:def-maps}, $\eta$ is a definable map, there exists a term $t\in\FMV{(A)}$ such that for any $p\in\FW{(A)}$,  $\eta(p)=t(p)$.
Let us fix  $a=\st_{\A}(t)$. We claim that $\eta=\ev_{\A}(a)$. Indeed, $p\in\FW{(A)}$,
\begin{align*}
\eta(p)&=t(p)=p([t]_{\theta_{\A}})=p\circ \st_{\A}(t)=f(a).
\end{align*}
The fact the $\cev$ is a natural transformation from  ${\rm Id}_{\TS}$ to $\FW\circ \McN$ follows by similar arguments.
\end{proof}
\begin{remark}
The natural duality expert will recognise in Proposition~\ref{p:adjunction} the adjunction between the natural functors determined by $\uline{\I}$ and its alter ego $\twiddle{\I}$. The fact that the representable functors are well defined and that the evaluation maps determine a dual adjunction is ensured by the compatibility conditions between the algebra and its alter ego. In our case, since the alter ego is just a totopological space, without relational of functional structure, the compatibility condition reduces to the continuity of the operations in $\uline{\I}$ with respect to the topology of $\twiddle{\I}$. We included a short proof here for completeness. 
\end{remark}

\begin{remark}
It is worthwhile to notice explicitly that the functors $\McN$ and $\FI$ are naturally isomorphic, i.e., for any set $I$ and any subset of $X\seq [0,1]^{I}$ there is an isomorphism between $\FI(X)$ and $\McN(X)$, natural in $X$.  Indeed, recall that $\FI(X)$ corresponds to the algebra $\F(I)/\mathbb{I}(X)$ (with the notation in the equation \eqref{eq:I}), while by Theorem \ref{thm:def-maps}, $\McN(X)$ is the algebra of definable functions restricted to $X$.  The isomorphism is hence described by sending a generic element $t(x)/\mathbb{I}(X)$ into the definable function given by $t$.  It is straightforward to see that this is a well-defined, bijective homomorphism. By way of example we show injectivity: suppose that $s$ and $t$ are two terms whose corresponding definable functions are equal on $X$, then by definition (see equation \eqref{eq:I}) the pair $(s,t)$ belongs to $\mathbb{I}(X)$, so $s/\mathbb{I}(X)=t/\mathbb{I}(X)$ in $\F(I)/\mathbb{I}(X)$ and the claim is proved.
\end{remark}

\begin{theorem}
The functor $\McN$ and the restriction of $\FW$ to $\MVS$ determine a dual categorical equivalence between $\TS$ and $\MVS$.
\end{theorem}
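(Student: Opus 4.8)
The plan is to promote the dual adjunction of Proposition \ref{p:adjunction} to a dual equivalence. Since $\ev$ and $\cev$ are already natural transformations $\mathrm{Id}_{\MVS}\to\McN\circ\FW$ and $\mathrm{Id}_{\TS}\to\FW\circ\McN$, it suffices to verify that each component $\ev_{\A}$ (for $\A\in\MVS$) is an isomorphism of MV-algebras and each component $\cev_{X}$ (for $X\in\TS$) is an isomorphism in $\TS$.

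For $\ev_{\A}$, Proposition \ref{p:adjunction} already supplies surjectivity, so only injectivity remains, and this is exactly where semisimplicity enters. Since $\MVS=\mathbb{ISP}(\uline{\I})$, a semisimple algebra $\A$ embeds into a power $\uline{\I}^{K}$; equivalently there is a family $\{g_{k}\}_{k\in K}\seq\Hom(\A,\uline{\I})=\FW(\A)$ that jointly separates the elements of $\A$. Hence if $\ev_{\A}(a)=\ev_{\A}(b)$, that is $f(a)=f(b)$ for every $f\in\FW(\A)$, then in particular $g_{k}(a)=g_{k}(b)$ for all $k$, whence $a=b$. Thus $\ev_{\A}$ is a bijective homomorphism, hence an isomorphism.

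For $\cev_{X}$, injectivity is immediate: the coordinate projections $\pi_{i}\colon X\to\I$ ($i\in I$) are $\Z$-maps, hence lie in $\McN(X)$, so $\cev_{X}(x)=\cev_{X}(y)$ forces $x_{i}=\cev_{X}(x)(\pi_{i})=\cev_{X}(y)(\pi_{i})=y_{i}$ for all $i$. The substantive point is surjectivity: every $\phi\in\Hom(\McN(X),\uline{\I})=\FW(\McN(X))$ must equal $\cev_{X}(x)$ for some $x\in X$. Set $x_{i}:=\phi(\pi_{i})\in\I$, producing a candidate $x=(x_{i})_{i\in I}\in\I^{I}$. For a finite $I'\seq I$, the subalgebra $\B_{I'}$ of $\McN(X)$ generated by $\{\pi_{i}\mid i\in I'\}$ is, by McNaughton's Theorem \ref{t:McNaughton}, isomorphic to the algebra $\McN(X_{I'})$ of $\Z$-maps on the compact set $X_{I'}:=\pi^{I}_{I'}(X)\seq\I^{I'}$ (two McNaughton functions on $\I^{I'}$ determine the same element of $\B_{I'}$ iff they agree on $X_{I'}$). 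Applying the dual equivalence for the finitely generated case \cite{BusCabMun201X} to the closed set $X_{I'}$, the restriction $\phi|_{\B_{I'}}$ is evaluation at a point $y^{I'}\in X_{I'}$; since $\pi_{i}\in\B_{I'}$ for $i\in I'$ we get $y^{I'}_{i}=\phi(\pi_{i})=x_{i}$, i.e.\ $\pi^{I}_{I'}(x)=y^{I'}\in X_{I'}$. Now the closed sets $X\cap(\pi^{I}_{I'})^{-1}(\pi^{I}_{I'}(x))$, as $I'$ ranges over the finite subsets of $I$, form a family with the finite intersection property (for $I'_{1},\dots,I'_{m}$ use $I'=I'_{1}\cup\cdots\cup I'_{m}$); as $X$ is compact their total intersection is nonempty, and any point in it agrees with $x$ on every coordinate, so $x\in X$. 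A term computation writing any $g\in\McN(X)$ as an MV-term $\tau(\pi_{i_{1}},\dots,\pi_{i_{k}})$ in finitely many projections and using that $\phi$ is a homomorphism then gives $\phi(g)=\tau(x_{i_{1}},\dots,x_{i_{k}})=g(x)$, so $\phi=\cev_{X}(x)$.

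It remains to note that the bijection $\cev_{X}$ is an isomorphism in $\TS$: it is a $\Z$-map because its $g$-th coordinate is the map $g$ itself, and its inverse is a $\Z$-map because the $i$-th coordinate of $\cev_{X}^{-1}(f)=f(\pi_{i})$ is simply the projection onto the $\pi_{i}$-coordinate of $\twiddle{\I}^{\McN(X)}$. With $\ev$ and $\cev$ thus componentwise isomorphisms, $\FW|_{\MVS}$ and $\McN$ form a dual equivalence. I expect the main obstacle to be the surjectivity of $\cev_{X}$: one must cut $X$ down to finite-dimensional subcubes, invoke the finitely generated duality of \cite{BusCabMun201X} there, and then reassemble the resulting points through a compactness argument that genuinely exploits the closedness of $X$ — precisely the point at which restricting $\TS$ to closed subspaces becomes indispensable.
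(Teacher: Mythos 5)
Your proposal is correct, and for the substantive half (the surjectivity of $\cev_{X}$) it takes a genuinely different route from the paper. The paper's own proof is very short: it observes that $\McN(X)$ is a \emph{separating} subalgebra of $\uline{\I}^{X}$ and then invokes Mundici's theorem that the maximal spectrum of a separating subalgebra of continuous $[0,1]$-valued functions on a compact Hausdorff space $X$ is homeomorphic to $X$ via evaluation (\cite[Theorem 4.16]{Mun2011}); this single citation handles both the fact that $\McN$ lands in $\MVS$ and the fact that $\cev$ is a natural isomorphism, while the statement about $\ev_{\A}$ is disposed of exactly as you do (onto from Proposition \ref{p:adjunction}, one-to-one from $\A\in\mathbb{ISP}(\uline{\I})$). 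You instead reconstruct the surjectivity of $\cev_{X}$ by hand: cutting down to the finitely generated subalgebras $\B_{I'}\cong\McN(\pi^{I}_{I'}(X))$, applying the finite-dimensional duality there, and reassembling the resulting points by a finite-intersection-property argument in the compact set $X$. This is more work but buys self-containedness and makes visible exactly where closedness of $X$ is used; it is also very much in the spirit of the paper's later reductions to finitely generated subalgebras (Lemma \ref{Lem:useful}, Remark \ref{r:inf-pol}). Two cosmetic remarks: you should state explicitly that $\McN$ ranges in $\MVS$ (immediate, since $\McN(X)$ is by definition a subalgebra of $\uline{\I}^{X}$, hence in $\mathbb{ISP}(\uline{\I})$), and note that componentwise isomorphism of the already-established natural transformations $\ev$ and $\cev$ is what upgrades the adjunction to an equivalence — both points you use implicitly and correctly.
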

\begin{proof}
Observe that by \cite[Theorem 4.16]{Mun2011}, for each $X\in \TS$, the MV-algebra $\McN(X)$ is a separating subalgebra of $\uline{\I}^X$, hence it is semisimple. So, in fact the functor $\McN$ ranges in $\MVS$.

We have already observed that $\ev_{\A}\colon \A\to \McN(X)$ is an onto map for each $\A\in \MV$. If $\A\in \MVS$, then $\ev_{\A}$ is also one-to-one this follow directly from the fact that $\A\in\mathbb{ISP}(\uline{\I})$.
Therefore, $\ev$ is a natural isomorphism between  ${\rm Id}_{\MV}$ and $\McN\circ\FW$. Similarly, using \cite[Theorem 4.16]{Mun2011}, it follows that $\cev$ is  a natural isomorphism between  ${\rm Id}_{\TS}$ and 
$\FW\circ\McN$. 
\end{proof}

\begin{remark}\label{r:embedding}
There are two differences between our presentation and the natural duality approach.  Firstly, we keep the embedding $\Hom(\A,\G)\seq \G^{\A}$ into a Tychonoff cube as an intrinsic part of the dual space. In other words, we keep track of a certain coordinate system for the topological space. The importance of this statement cannot be overestimated. The notion of $\Z$-map between closed subspaces of Tychonoff cubes relies on the enveloping cubes. Without the coordinate systems the notion of projection map does not make sense. Moreover, our definition of $\McN$ rests upon $\Z$-maps, and therefore depends on the enveloping cube. 

The second important difference is that our description of dual of morphisms is not obtained as structure preserving maps, but in a geometric way depending on how each subspace is embedded into a Tychonoff cube.  
\end{remark}

%%%%%%%%%%%%%%%%%%%%%%%%%%%
\section{Applications}
\label{sect:applications}
%%%%%%%%%%%%%%%%%%%%%%%%%%%

%%%%%%%%%%%%%%%%%%%%%%%%%%%
\subsection{Coordinate systems and tensor products}

%%%%%%%%%%%%%%%%%%%%%%%%%%%

In this subsection we describe the dual of the semisimple tensor product of semisimple MV-algebras. This provides a clear example to our earlier observation on the significance of keeping track of the embedding of the dual space into a Tychonoff cube (see Remark \ref{r:embedding}). We refer the reader to \cite[Chapter~IX]{Mun2011} for the definition and construction of MV-tensor product and the semisimple tensor product.   According to Remark \ref{Max-W-homeo}, we identify the maximal spectrum of $\A$ with $\FW(\A)$.  Let us fix for the remainder of  this section two semisimple MV-algebras $\A$ and $\B$, and let $\A\otimes \B$ and $\A\coprod \B$ denote their semisimple tensor product and coproduct, respectively.

In \cite[Example~9.15]{Mun2011} it is observed that even for finite (whence semisimple) MV-algebras the (semisimple) tensor product and the coproduct do not coincide. However, as shown below,  the maximal spectrum of the tensor product $\A\otimes \B$ is homeomorphic to the maximal spectrum of the coproduct $\A\coprod \B$.  This happens because even if $\FW(\A\otimes\B)$ is homeomorphic to $\FW(\A\coprod\B)$, their enveloping cubes are different as we show in the remainder of the section.

We start with an easy corollary of duality result obtained in the previous section.
\begin{corollary}\label{cor:W(A)xW(B)}
The maximal spectra $\FW(\A\coprod\B)$  and $\FW(\A)\times\FW(\B)$ are  $\Z$-home\-omorphic.
\end{corollary}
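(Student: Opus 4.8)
The plan is to exploit the fact that the coproduct $\A \coprod \B$ in the category $\MV$ is precisely the categorical coproduct, and that the contravariant functor $\FW$ (equivalently the dual equivalence established in the previous section) should carry coproducts in $\MVS$ to products in $\TS$. Concretely, I would show directly that $\FW(\A\coprod\B) = \Hom(\A\coprod\B, \uline{\I})$ is in natural bijection with $\FW(\A)\times\FW(\B) = \Hom(\A,\uline{\I})\times\Hom(\B,\uline{\I})$. This bijection is the standard universal-property map: by definition of the coproduct there are injections $\kappa_\A\colon\A\to\A\coprod\B$ and $\kappa_\B\colon\B\to\A\coprod\B$, and any homomorphism $g\colon\A\coprod\B\to\uline{\I}$ restricts to the pair $(g\circ\kappa_\A,\, g\circ\kappa_\B)$, while conversely any pair of homomorphisms into $\uline{\I}$ factors uniquely through $\A\coprod\B$. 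So the assignment $g\mapsto(g\circ\kappa_\A, g\circ\kappa_\B)$ is a bijection.

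Next I would promote this bijection to a $\Z$-homeomorphism. First, continuity: the coproduct $\A\coprod\B$ is generated (as an MV-algebra) by the images $\kappa_\A(\A)\cup\kappa_\B(\B)$, and one uses Lemma~\ref{Lem:useful} together with the concrete description of $\FW$ on generated subalgebras. Indeed, identifying $\FW(\A\coprod\B)\seq\twiddle{\I}^{A\coprod B}$, the coordinates indexed by $\kappa_\A(A)$ realize the projection onto $\FW(\A)$ and those indexed by $\kappa_\B(B)$ realize the projection onto $\FW(\B)$, via the dual-of-inclusion formula \eqref{eq:proj}. Thus the comparison map, read in coordinates, is given by coordinate projections $\pi^{A\coprod B}_{A}$ and $\pi^{A\coprod B}_{B}$, which are $\Z$-maps; its inverse is equally described by gluing the coordinate systems of $\FW(\A)$ and $\FW(\B)$ side by side inside $\twiddle{\I}^{A}\times\twiddle{\I}^{B}=\twiddle{\I}^{A\sqcup B}$, and this gluing is again coordinatewise a projection, hence a $\Z$-map. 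By Theorem~\ref{thm:def-maps} these are definable maps, and a bijective definable map with definable inverse is precisely a $\Z$-homeomorphism.

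Alternatively — and more cleanly — I would simply invoke the dual equivalence between $\MVS$ and $\TS$ established just before the corollary. Since $\McN$ and (the restriction of) $\FW$ form a dual categorical equivalence, $\FW$ sends colimits in $\MVS$ to limits in $\TS$; in particular it sends the coproduct $\A\coprod\B$ to the product of $\FW(\A)$ and $\FW(\B)$ computed in $\TS$. It remains only to check that the categorical product in $\TS$ is carried by the usual cartesian product $\FW(\A)\times\FW(\B)$, embedded into $\twiddle{\I}^{A}\times\twiddle{\I}^{B}$, with the two projections as structure maps; this is where keeping track of the enveloping cube (Remark~\ref{r:embedding}) pays off, since the product object must be presented together with its coordinate system.

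The main obstacle I anticipate is verifying that the cartesian product $\FW(\A)\times\FW(\B)$, with the coordinatewise embedding into $\twiddle{\I}^{A\sqcup B}$, genuinely serves as the categorical product in $\TS$ with respect to $\Z$-maps, rather than merely a topological product. One must confirm that the pairing of two $\Z$-maps is again a $\Z$-map and that the projections are $\Z$-maps — both of which follow from Definition~\ref{definition:zeta}, since a map into a product is a $\Z$-map exactly when each of its components is — but the bookkeeping of indexing sets and the identification of the coproduct's generators with the disjoint union of coordinate systems requires care. Everything else is a formal consequence of the duality theorem and Lemma~\ref{Lem:useful}.
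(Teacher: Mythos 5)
Your proposal is correct and matches the paper's intent: the paper states this as ``an easy corollary of the duality result obtained in the previous section'' and gives no further argument, which is exactly your second route (a dual equivalence carries coproducts to products, and the cartesian product with its coordinatewise embedding into $\twiddle{\I}^{A\sqcup B}$ is the product in $\TS$). Your more explicit first route, via the universal-property bijection $g\mapsto(g\circ\kappa_\A,g\circ\kappa_\B)$ realized by coordinate projections and inverted by term-functions on the generators $\kappa_\A(A)\cup\kappa_\B(B)$, is just a careful unwinding of the same argument and is sound.
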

\begin{proposition}
The maximal spectra $\FW(\A\otimes\B)$  and $\FW(\A)\times\FW(\B)$ are homeomorphic.
\end{proposition}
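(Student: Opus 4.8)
The plan is to realise $\A\otimes\B$ concretely as a separating MV-subalgebra of the continuous $[0,1]$-valued functions on the compact Hausdorff space $\FW(\A)\times\FW(\B)$, and then to read off its maximal spectrum from that enveloping space. Write $X:=\FW(\A)$ and $Y:=\FW(\B)$; by Remark~\ref{Max-W-homeo} these are homeomorphic copies of the maximal spectra $\MAX(\A)$ and $\MAX(\B)$, and by Theorem~\ref{Theo:RepFunctor} both are compact Hausdorff, so $X\times Y$ is again compact Hausdorff. The semisimplicity of $\A$ and $\B$ yields faithful representations $a\mapsto\hat{a}$ and $b\mapsto\hat{b}$ of $\A$ and $\B$ as separating subalgebras of the continuous functions $C(X)$ and $C(Y)$, where $\hat{a}(x)$ denotes the value of $a$ at the homomorphism $x\in X$.

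First I would recall from \cite[Chapter~IX]{Mun2011} the concrete description of the semisimple tensor product: $\A\otimes\B$ is isomorphic to the MV-subalgebra of $C(X\times Y)$ generated by the \emph{separated products} $\hat{a}\boxdot\hat{b}\colon(x,y)\mapsto\hat{a}(x)\cdot\hat{b}(y)$, the dot denoting ordinary multiplication of reals. Next I would check that this subalgebra separates the points of $X\times Y$: since the constant function $1$ lies in every MV-subalgebra, the elements $\hat{a}\boxdot 1\colon(x,y)\mapsto\hat{a}(x)$ and $1\boxdot\hat{b}\colon(x,y)\mapsto\hat{b}(y)$ belong to $\A\otimes\B$; because $\A$ separates the points of $X$ and $\B$ those of $Y$, these functions already separate any two distinct points of $X\times Y$. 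Thus $\A\otimes\B$ is a separating MV-subalgebra of $C(X\times Y)$ with $X\times Y$ compact Hausdorff.

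The final step is to invoke \cite[Theorem~4.16]{Mun2011}: the maximal spectrum of a separating MV-subalgebra of $C(Z)$, with $Z$ compact Hausdorff, is homeomorphic to $Z$, the homeomorphism sending a point $z$ to the maximal ideal that is the kernel of evaluation at $z$. Applying this with $Z=X\times Y$ gives $\MAX(\A\otimes\B)\cong X\times Y$, and a further appeal to Remark~\ref{Max-W-homeo} identifies the left-hand side with $\FW(\A\otimes\B)$. Hence $\FW(\A\otimes\B)$ and $\FW(\A)\times\FW(\B)$ are homeomorphic, as claimed.

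I expect the genuine content to sit in the surjectivity half of the cited Theorem~4.16, namely in showing that \emph{every} maximal ideal of $\A\otimes\B$ arises as the kernel of an evaluation at some $(x,y)\in X\times Y$; this is where compactness of $X\times Y$ and the separating property are essential, whereas injectivity is immediate from separation, and the upgrade from a continuous bijection to a homeomorphism is automatic since $X\times Y$ is compact and $\FW(\A\otimes\B)$ is Hausdorff. It is worth stressing the contrast with Corollary~\ref{cor:W(A)xW(B)}: here one obtains only a \emph{homeomorphism}, not a $\Z$-homeomorphism, precisely because the realisation above embeds $\A\otimes\B$ into the cube over $X\times Y$ through products of coordinate functions rather than through the coordinates themselves, so the two spaces carry genuinely different coordinate systems, and hence different $\Z$-map structures, exactly the phenomenon anticipated in Remark~\ref{r:embedding}.
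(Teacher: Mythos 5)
Your argument is essentially identical to the paper's: both realise $\A\otimes\B$ via Mundici's Theorem~9.17 as the separating subalgebra of continuous functions on $\FW(\A)\times\FW(\B)$ generated by the pointwise products, and then conclude by the standard fact that the maximal spectrum of a separating MV-subalgebra of $C(Z)$, $Z$ compact Hausdorff, is homeomorphic to $Z$ (the paper cites \cite[Theorem~3.4.3]{CigDotMun2000} for this last step). Your explicit verification of the separating property and your closing remarks on the coordinate-system contrast are welcome elaborations, but the route is the same.
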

\begin{proof}
In \cite[Theorem~9.17]{Mun2011} it is proved that $\A\otimes \B$ is isomorphic to the subalgebra $\C$ of continuous functions in $\twiddle{\I}^{(\FW(\A)\times\FW(\B))}$ generated by the maps $(f\cdot g)(x,y)=f(x)\cdot g(y)$ for $(f,g)\in \McN(\FW(\A))\times \McN(\FW(\B))$,  where $f(x)\cdot g(y)$ denotes the usual product of $f(x)$ and $g(y)$ in $\I$. It is also observed there that this algebra is separating.  Hence, by \cite[Theorem~3.4.3]{CigDotMun2000}, $\FW(\C)\cong  \FW(\A)\times \FW(\B)$.

\end{proof}
\begin{corollary}\label{cor:W(A)xW(B)2}
The maximal spectra $\FW(\A\coprod\B)$  and $\FW(\A\otimes\B)$ are  home\-omorphic.
\end{corollary}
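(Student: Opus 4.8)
The plan is to deduce this purely by transitivity from the two immediately preceding results, so the work is really just bookkeeping about which class of maps we are composing. First I would invoke Corollary \ref{cor:W(A)xW(B)}, which asserts that $\FW(\A\coprod\B)$ and $\FW(\A)\times\FW(\B)$ are $\Z$-homeomorphic. The one genuine observation to make explicit is that a $\Z$-homeomorphism is in particular an ordinary homeomorphism: by Definition \ref{definition:zeta} a $\Z$-map has components that are piecewise linear maps with integer coefficients, and every such map is continuous; hence a $\Z$-homeomorphism is a continuous bijection with continuous inverse, i.e.\ a homeomorphism of topological spaces. Consequently $\FW(\A\coprod\B)$ and $\FW(\A)\times\FW(\B)$ are homeomorphic.

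Next I would recall the Proposition established just above, which gives that $\FW(\A\otimes\B)$ and $\FW(\A)\times\FW(\B)$ are homeomorphic. Composing the homeomorphism $\FW(\A\coprod\B)\cong\FW(\A)\times\FW(\B)$ with the inverse of the homeomorphism $\FW(\A\otimes\B)\cong\FW(\A)\times\FW(\B)$ yields the desired homeomorphism between $\FW(\A\coprod\B)$ and $\FW(\A\otimes\B)$, which completes the argument.

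There is essentially no obstacle here, since the statement is a formal corollary of the two results it follows; the only point worth stressing, and the reason the statement is phrased with ``homeomorphic'' rather than ``$\Z$-homeomorphic'', is that the bridge provided by the tensor product in the Proposition is only topological. Indeed, as the surrounding discussion emphasises (see the paragraph preceding Corollary \ref{cor:W(A)xW(B)}), the enveloping Tychonoff cubes of $\FW(\A\coprod\B)$ and $\FW(\A\otimes\B)$ differ, so one cannot expect the comparison map to respect the coordinate structure and hence to be a $\Z$-homeomorphism. I would therefore be careful to claim only a homeomorphism in the conclusion, in keeping with the narrative that the tensor product and the coproduct have homeomorphic but not $\Z$-homeomorphic dual spaces.
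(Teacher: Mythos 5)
Your proposal is correct and follows exactly the route the paper intends: the corollary is an immediate consequence of Corollary \ref{cor:W(A)xW(B)} and the preceding Proposition by transitivity, once one notes that a $\Z$-homeomorphism is in particular a homeomorphism. Your additional remark about why only ``homeomorphic'' (and not ``$\Z$-homeomorphic'') can be claimed matches the paper's own discussion of the differing enveloping cubes.
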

Finally we prove the following theorem.
\begin{theorem}\label{theo:tensor}
Let $\cdot\colon \I^A\times \I^B\to \I^{A\times B}$
be the map defined by 
\begin{align*}
(f\cdot g)(a,b):= f(a)\cdot g(b)
\end{align*}
for each $(f,g)\in\I^A\times \I^B$.

The spaces  $\FW(\A\otimes\B)\seq\twiddle{\I}^{\A\otimes \B}$ and $\cdot(\FW(\A)\times\FW(\B))\seq \twiddle{\I}^{A\times B}$ are  $\Z$-homeomorphic.
\end{theorem}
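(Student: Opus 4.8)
We want a $\Z$-homeomorphism between $\FW(\A\otimes\B)$ (sitting inside $\twiddle{\I}^{\A\otimes\B}$) and the image $\cdot(\FW(\A)\times\FW(\B))$ (sitting inside $\twiddle{\I}^{A\times B}$). The previous corollaries already give a *homeomorphism* $\FW(\A\otimes\B)\cong\FW(\A)\times\FW(\B)$. The whole point of the theorem—emphasized in Remark \ref{r:embedding}—is that the two realizations live in *different enveloping cubes*, so we must upgrade the abstract homeomorphism to a $\Z$-homeomorphism that respects the specific coordinate embeddings.

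**Plan.** The plan is to build the $\Z$-homeomorphism explicitly through the product map $\cdot$ and the isomorphism from \cite[Theorem~9.17]{Mun2011}. First I would recall that $\A\otimes\B\cong\C$, where $\C\seq\twiddle{\I}^{\FW(\A)\times\FW(\B)}$ is the separating subalgebra generated by the pointwise products $(f\cdot g)(x,y)=f(x)\cdot g(y)$, with $(f,g)\in\McN(\FW(\A))\times\McN(\FW(\B))$. Because $\ev$ and $\cev$ are natural isomorphisms (from the duality theorem), the dual space $\FW(\A\otimes\B)$ is canonically identified with $\FW(\C)$, and by \cite[Theorem~3.4.3]{CigDotMun2000} the latter is homeomorphic to $\FW(\A)\times\FW(\B)$ via evaluation. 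So the homeomorphism in question is, up to these identifications, the map sending a point $(x,y)\in\FW(\A)\times\FW(\B)$ to the MV-homomorphism "evaluate at $(x,y)$" on $\C$.

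**The coordinate bookkeeping.** Next I would chase what this homeomorphism becomes in the two concrete cubes. On the target side, each $(f,g)\in\McN(\FW(\A))\times\McN(\FW(\B))$ can, via $\ev_\A$ and $\ev_\B$, be taken to come from a pair $(a,b)\in A\times B$, so that $f=\pi^A_a$ and $g=\pi^B_b$ as projections; then $(f\cdot g)(x,y)=x(a)\cdot y(b)$, which is exactly the $(a,b)$-coordinate of $\cdot(x,y)\in\twiddle{\I}^{A\times B}$ under the map defined in the statement. This shows that the image of $\FW(\A)\times\FW(\B)$ under $\cdot$ is precisely the set of coordinate tuples read off by evaluating on the generators $(f\cdot g)$ of $\C$. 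On the source side, $\FW(\A\otimes\B)\seq\twiddle{\I}^{\A\otimes\B}$ records evaluation on *all* of $\A\otimes\B$, but since $\C$ is generated by the $(f\cdot g)$, an element of $\FW(\A\otimes\B)$ is determined by its values on these generators. The coordinate-restriction map from $\twiddle{\I}^{\A\otimes\B}$ onto the coordinates indexed by the generators $\{f\cdot g\}$ is a projection, hence a $\Z$-map, and it carries $\FW(\A\otimes\B)$ bijectively onto $\cdot(\FW(\A)\times\FW(\B))$.

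**Finishing and the main obstacle.** To conclude it suffices to verify that this coordinate-restriction bijection and its inverse are both $\Z$-maps. One direction is the projection just described. For the inverse I would use Theorem \ref{thm:def-maps}: it is enough to check that each $\A\otimes\B$-coordinate, restricted to $\FW(\A\otimes\B)$, is a $\Z$-map of the generator-coordinates, and this holds because every element of $\A\otimes\B\cong\C$ is a finite MV-term in the generators $f\cdot g$, so the corresponding evaluation coordinate is an MV-combination—hence a definable, i.e.\ $\Z$-, function—of the generator-coordinates. By Theorem \ref{thm:def-maps} definable maps are exactly $\Z$-maps, and both maps are continuous bijections between closed (compact Hausdorff) subspaces, so they are mutually inverse $\Z$-homeomorphisms. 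The main obstacle I anticipate is purely the bookkeeping of the three nested index sets—$\A\otimes\B$, the generating family $\{f\cdot g\}$, and $A\times B$—and checking that each generator $f\cdot g$ corresponds under $\ev$ to a genuine coordinate $(a,b)\in A\times B$ so that the restriction map lands *exactly* on the image $\cdot(\FW(\A)\times\FW(\B))$ and not merely onto a homeomorphic copy; making this identification honest, rather than just up to homeomorphism, is what distinguishes this theorem from Corollary \ref{cor:W(A)xW(B)2}.
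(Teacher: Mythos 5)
Your argument is correct, but it reaches the conclusion by a genuinely different route than the paper. The paper works entirely on the algebraic side: it takes the explicit presentation $(A\times B,R)$ of $\A\otimes \B$ from \cite[Construction~9.12, Proposition~9.13]{Mun2011}, invokes Theorem~\ref{Theo:DualNat} to identify $\FW(\A\otimes\B)$ with $\mathbb{V}(R)\seq \I^{A\times B}$ up to $\Z$-homeomorphism, and then proves the set equality $\mathbb{V}(R)=\cdot(\FW(\A)\times\FW(\B))$ by checking each relation in $R$ against a product $p\cdot q$ for one inclusion, and by using \cite[Lemmas~9.11 and~9.16]{Mun2011} (every $r\in\mathbb{V}(R)$ is a bimorphism, hence splits as $r(a,b)=r(a,1)\cdot r(1,b)$) for the other. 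You instead start from the already-established homeomorphism $\FW(\A\otimes\B)\cong\FW(\A)\times\FW(\B)$ (the Proposition preceding Corollary~\ref{cor:W(A)xW(B)2}, which rests on \cite[Theorem~9.17]{Mun2011} and \cite[Theorem~3.4.3]{CigDotMun2000}) and upgrade it to a $\Z$-homeomorphism by coordinate bookkeeping: restriction to the generator coordinates in one direction, and term functions in finitely many generator coordinates (hence $\Z$-maps by Theorem~\ref{thm:def-maps}) in the other. Your route avoids the relation-by-relation verification, at the price of leaning on the preceding Proposition; the paper's route is more computational but exhibits the enveloping cube $\I^{A\times B}$ as arising directly from a presentation, which fits its functorial machinery. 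One point you should make honest: the assignment $(a,b)\mapsto \ev_{\A}(a)\cdot\ev_{\B}(b)$ from $A\times B$ to the generating set of $\A\otimes\B$ is in general not injective (e.g.\ $a\otimes 0=a'\otimes 0$ for all $a,a'$), so your ``coordinate restriction'' is not literally a projection onto a subset of the coordinates of $\twiddle{\I}^{\A\otimes\B}$, but the precomposition $h\mapsto\bigl(h(a\otimes b)\bigr)_{(a,b)\in A\times B}$ along that reindexing map; since each component is still a single coordinate projection, it remains a $\Z$-map, but the distinction matters precisely because the theorem is about the specific embedding into $\twiddle{\I}^{A\times B}$.
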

\begin{proof}
By \cite[Construction~9.12 and Proposition~9.13]{Mun2011}, $\A\otimes\B$ admits the presentation $(A\times B,R)$ where $R$ is the smallest set containing, for any $a, a_{1},a_{2}\in A$ and any $b,b_{1},b_{2}\in B$:
\begin{enumerate}
\item $((1,1),\mathbf{1})$, $((a,0),\mathbf{0})$ and $((0,b),\mathbf{0})$
\item $\big((a,b_1\vee b_2),(a,b_1)\vee(a, b_2)\big)$
\item $\big((a_1\vee a_2,b),(a_1,b)\vee(a_2, b)\big)$
\item $\big((a,b_1\wedge b_2),(a,b_1)\wedge(a, b_2)\big)$
\item $\big((a_1\wedge a_2,b),(a_1,b)\wedge(a_2, b)\big)$
\item $\big((a,b_1)\odot(a, b_2),0\big)$, whenever $b_1\odot b_2=0$
\item $\big((a_1,b)\odot(a_2, b),0\big)$, whenever $a_1\odot a_2=0$
\item $\big((a,b_1\oplus b_2),(a,b_1)\oplus(a,b_2)\big)$, whenever $b_1\odot b_2=0$
\item $\big((a_1\oplus a_2, b),(a_1,b)\oplus(a_2,b)\big)$, whenever $a_1\odot a_2=0$.
\end{enumerate}
So, by Theorem \ref{Theo:DualNat}, $\FW(\A\otimes\B)$ is $\Z$-homeomorphic to $\FV(A\times B,R)=\mathbb{V}(R)\seq \I^{A\times B}$. 
We claim that $\mathbb{V}(R)=\cdot(\FW(\A)\times\FW(\B))$. 
First observe that for each $(p,q),(r,s)\in \FW(\A)\times\FW(\B))$,$\cdot(p,q)=\cdot(r,s)$ implies $(p,q)=(r,s)$. Indeed, since the maps $p$, $q$, $r$, and $s$ are MV-homomorphisms, $p(1)=r(1)=q(1)=s(1)=1$, so, if $\cdot(p,q)=\cdot(r,s)$ then $p=r$ and $q=s$. 

We prove that $\cdot(\FW(\A)\times\FW(\B))\seq \mathbb{V}(R)$. Let $r\in\cdot(\FW(\A)\times\FW(\B))$, so there are $p\in \FW(\A)\seq\I^{A}$ and $q\in\FW(\B)\seq \I^{B}$ such that $r=p\cdot q$. We have 
\begin{align*}
r(1,1)&=p(1)\cdot q(1)=1=r(\mathbf{1})\\
r(a,0)&=p(a)\cdot q(0)=0=r(\mathbf{0})\\
r(a,b_1\vee b_2)&=p(a)\cdot q(b_1\vee b_2)=p(a)\cdot (q(b_1)\vee q(b_2))\\
&=(p(a)\cdot q(b_1))\vee (q(a)\cdot p(b_2))\\
&=r(a,b_1)\vee r(a, b_2)=r((a,b_1)\vee(a, b_2)).
\end{align*}
If $b_1\odot b_2=0$, since $q(b_1)\odot q(b_2)=0$ and $q(b_1)\oplus q(b_2)=q(b_1)+q(b_2)$, we have
\begin{align*}
r((a,b_1)\odot(a, b_2))&=r(a,b_1)\odot r(a,b_2)=(p(a)\cdot q(b_1))\odot(p(a)\cdot q(b_2))\\
&=p(a)\cdot(q(b_1)\odot q(b_2))=0=r(0))\\
r(a,b_1\oplus b_2)&=p(a)\cdot q(b_1\oplus b_2)\\
&=p(a)\cdot (q(b_1)\oplus q( b_2))=p(a)\cdot (q(b_1)+q( b_2))\\
&=(p(a)\cdot q(b_1))+ (p(a)\cdot q( b_2))=r(a,b_1)+r(a,b_2)\\
&=r(a,b_1)\oplus r(a,b_2)\\
&=r((a,b_1)\oplus(a,b_2)).
\end{align*}
The rest of the necessary conditions to ensure that $p\cdot q$ is in $\mathbb{V}(R)$ can be checked in a similar way.

To prove that $ \mathbb{V}(R) \seq\cdot(\FW(\A)\times\FW(\B))$, let $r\in \mathbb{V}(R)$.
Observe  that the map $r\colon A\times B\to [0,1]$ is a bimorphism. Then by \cite[Lemma~9.11]{Mun2011}, the map $p\colon A\to \I$ defined by $p(a)=r(a,1)$ is an MV-homomorphism. 
Similarly $q\colon {B}\to \I$ defined by $q(b)=r(1,b)$ is an MV-homomorphism. Therefore $(p,q)\in \FW(\A)\times \FW(\B)$.
Finally by \cite[Lemma~9.16]{Mun2011},  $r(a,b)=p(a)\cdot q(b)$. 
\end{proof}

Summing up, by Corollary \ref{cor:W(A)xW(B)}, the dual of the coproduct $\FW(\A\coprod\B)$ is $\Z$-homeomorphic to $\FW(\A)\times\FW(\B)\seq \twiddle{\I}^{A\uplus B}$ (where $\uplus$ denote disjoint union) while by Theorem \ref{theo:tensor} the dual of the tensor product, $\FW(A\otimes B)$ is $\Z$-homeomorphic to $\cdot(\FW(\A)\times \FW(\B))\seq \twiddle{\I}^{A\times B}$. Therefore, although the two spaces are homeomorphic, they differ on the enveloping space.

%%%%%%%%%%%%%%%%%%%%%%%%%%%
\subsection{Strongly semisimple MV-algebras}
\label{sec:strongly}
%%%%%%%%%%%%%%%%%%%%%%%%%%%

In this subsection and the following one we apply the duality of the previous section to extend the characterisations of two kinds of MV-algebras that up to now only work for finitely generated MV-algebras.   A crucial observation will be that the properties to be checked for the characterisations depend only on finitely generated subalgebras.

Following Dubuc and Poveda \cite{DuPo2010},
we say that an MV-algebra $A$  is
\word{strongly semisimple}  if all its quotients over a principal ideal are semisimple. 
In  \cite{BuMu201X}, Busaniche and Mundici characterised $2$-generated strongly semisimple MV-algebras, and in \cite{Cab201X} the first author generalised their result to all finitely generated MV-algebras.
This characterisation is based on the $n$-dimensional generalisation of Bouligand-Severi tangents (see \cite{Bo1930,Se1927,Se1931}). 
We extend here the characterisation to all strongly semisimple MV-algebras, but first  we need to recall some definitions.

Given a $k$-tuple $u=(u_1,\ldots,u_k)$ of pairwise orthogonal unit vectors in $\mathbb R^n$, for each   $l\leq k$, let 
\[\mathsf{p}_{l}\colon \mathbb R^n\to \mathbb{R}u_1+\cdots+\mathbb{R}u_{l}\] 
denote the orthogonal projection map onto the linear subspace of $\mathbb R^n$ generated by $u_1,\ldots,u_l$.

\begin{definition}[\mbox{\cite[Definition~3.1]{Cab201X}}]
Let $X$ be a closed subset of $\twiddle{\I}^n$ and $u=(u_1,\ldots,u_k)$ be a $k$-tuple of  pairwise orthogonal unital vectors in $\mathbb R^n$.
We say that $u$ is a  \word{Bouligand-Severi tangent of $X$ at $x$ of degree $k$}, or simply a $k$-tangent of $X$ at $x$, if $X$ contains a sequence of points $x_1,x_2,\ldots$ such that
\begin{enumerate} 
\item $\lim_{i\to \infty }x_i=  x$;
\item  no  vector  $x_i-x$ lies in   $\mathbb{R}u_1+\cdots+\mathbb{R}u_k$;
\item  defining  ${x_i^1= {(x_i-x)}/{||x_i-x||}}$ and inductively,  
\[x_i^{l}=\frac{x_i-x -\mathsf{p}_{l-1}(x_i-x)}{||x_i-x -\mathsf{p}_{l-1}(x_i-x)||}\,\,\,\,\,  \mbox{ $(l\leq k)$},\]
one has  $\lim_{i\to \infty }x^s_i =  u_s$, for each $s\in\{1,\ldots,k\}$.
\end{enumerate}
A $k$-tangent  $u=(u_1,\ldots,u_k)$  of 
 $X\seq \twiddle{\I}^n$    at   $x$ 
   is said to be \word{rationally outgoing}
 if  there is a rational simplex $S$,
  together with a
face $F\seq S$  and
  a $k$-tuple
  $\lambda=(\lambda_1,\ldots,\lambda_k)\in\mathbb{R}_{>0}$  
  such that
\begin{enumerate}
\item $S\supseteq {\rm conv}(x,x+\lambda_1u_1,\ldots, x+\lambda_1u_1+\cdots+\lambda_ku_k)$;
\item $F\not\supseteq  {\rm conv}(x,x+\lambda_1u_1,\ldots, x+\lambda_1u_1+\cdots+\lambda_ku_k)$;
\item $F\cap X = S\cap X.$
\end{enumerate}
\end{definition}
We now recall the main result \cite[Theorem 2.4]{Cab201X}.

\begin{theorem}\label{theorem:aereo}
For any  closed set  $X\seq \twiddle{\I}^n$ the following conditions are equivalent:
\begin{enumerate}[(i)]
\item The MV-algebra $\McN(X)$ is strongly semisimple.
\item The set $X$ has no rationally outgoing $k$-tangent, for $k=1,\ldots,n-1$.
\end{enumerate}
\end{theorem}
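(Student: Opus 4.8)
The plan is to turn strong semisimplicity of $\McN(X)$ into a single geometric separation property of $X$ and then recognise that property as the absence of rationally outgoing tangents. First I would set up the dictionary furnished by the duality of Section~\ref{sec:adjoint}: since $X\seq\twiddle{\I}^{n}$ is compact, its points are in bijection with the maximal ideals of $\McN(X)$ via $x\mapsto\{g\mid g(x)=0\}$ (Remark~\ref{Max-W-homeo}), and $\McN(X)$ is itself the algebra of $\Z$-maps on $X$. Fixing $f\in\McN(X)$ and writing $Z(f)=\{x\in X\mid f(x)=0\}$ for its zero set and $J(Z(f))=\{g\in\McN(X)\mid g|_{Z(f)}\equiv 0\}$, one always has $\langle f\rangle\seq J(Z(f))$, and the quotient $\McN(X)/\langle f\rangle$ is semisimple precisely when this inclusion is an equality, i.e. when the principal ideal is the intersection of the maximal ideals above it. Since $g\in\langle f\rangle$ if and only if $g\le mf$ (in the truncated sense) for some integer $m\ge1$, strong semisimplicity becomes the following \emph{{\L}ojasiewicz-type} condition: for all $f,g\in\McN(X)$ with $Z(f)\seq Z(g)$, the ratio $g/f$ is bounded on $\{f>0\}$.

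I would then prove the two contrapositives. For $\lnot$(i)$\Rightarrow\lnot$(ii), a failure of semisimplicity gives $f,g$ with $Z(f)\seq Z(g)$ and points $x_{i}\in X$ with $f(x_{i})>0$ and $g(x_{i})/f(x_{i})\to\infty$; since $g\le1$ this forces $f(x_{i})\to0$, so after passing to a subsequence $x_{i}\to x\in Z(f)$, a tangential approach. Feeding $(x_{i})$ into the iterated normalised difference vectors from the definition of a $k$-tangent produces, again after refining, a limit tuple $u=(u_{1},\dots,u_{k})$; the piecewise-linear structure of $f$ and $g$, whose pieces are rational affine maps, is what pins this tangent down as \emph{rationally outgoing}, the simplex $S$ and face $F$ being assembled from the linearity cells that the sequence visits, with the condition $F\cap X=S\cap X$ recording that $g$ survives along the outgoing direction while $f$ does not. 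For the converse $\lnot$(ii)$\Rightarrow\lnot$(i), starting from a rationally outgoing $k$-tangent with data $S,F,\lambda$ I would manufacture a McNaughton function $f$ cut out so that $Z(f)$ matches $F\cap X=S\cap X$ together with a companion $g$ vanishing on $Z(f)$ whose values along the tangent sequence satisfy $g(x_{i})/f(x_{i})\to\infty$; this exhibits $g\in J(Z(f))\setminus\langle f\rangle$, so $\McN(X)/\langle f\rangle$ has a nonzero infinitesimal and is not semisimple.

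The hard part will be the geometric implication $\lnot$(i)$\Rightarrow\lnot$(ii): upgrading a bare unbounded-ratio sequence to a genuine rationally outgoing tangent of the correct degree. One must control the iterated Gram--Schmidt directions $x_{i}^{s}$ simultaneously for all $s\le k$, verify that their limits are edges or normals of bona fide \emph{rational} simplices (exploiting that an integer-coefficient McNaughton function is rational-affine on each cell), and secure the combinatorial equality $F\cap X=S\cap X$ rather than a mere inclusion --- this is precisely the Bouligand--Severi analysis of \cite[Theorem~2.4]{Cab201X}, which I would invoke. What the present framework contributes is the translation step: the identifications $\MAX(\,\cdot\,)\cong\FW(\,\cdot\,)$ and $\McN\cong\FI$ from Section~\ref{sec:adjoint} are exactly what let the otherwise algebraic notion of strong semisimplicity be read directly off the Euclidean geometry of the closed set $X\seq\twiddle{\I}^{n}$.
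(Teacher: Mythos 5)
The first thing to note is that the paper itself offers no proof of this statement: Theorem~\ref{theorem:aereo} is introduced with the words ``We now recall the main result \cite[Theorem 2.4]{Cab201X}'' and is quoted from that reference without argument, so there is no in-paper proof to measure your attempt against, and your ultimate move --- invoking \cite[Theorem~2.4]{Cab201X} --- is exactly what the paper does. Your preliminary dictionary is correct and is indeed the standard opening of the actual proof in \cite{BuMu201X,Cab201X}: identifying the points of the compact set $X$ with the maximal ideals of $\McN(X)$, observing that $\McN(X)/\langle f\rangle$ is semisimple precisely when $\langle f\rangle=J(Z(f))$, and unwinding $g\in\langle f\rangle$ as $g\le mf$ for some $m$ to arrive at the {\L}ojasiewicz-type criterion that $g/f$ be bounded on $\{f>0\}$ whenever $Z(f)\seq Z(g)$.

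Read as a self-contained proof, however, the proposal is circular at the decisive step: the ``Bouligand--Severi analysis'' you propose to invoke \emph{is} \cite[Theorem~2.4]{Cab201X}, i.e.\ the very equivalence being established. Everything that makes the theorem nontrivial lives in the part you defer: extracting from an unbounded-ratio sequence a limit tuple $(u_1,\dots,u_k)$ of the correct degree together with a \emph{rational} simplex $S$ and face $F$ satisfying the exact equality $F\cap X=S\cap X$ (this requires rational triangulations adapted to the linearity cells of $f$ and $g$, not merely compactness and passage to subsequences), and, conversely, manufacturing from the tangent data a McNaughton function $f$ with $Z(f)=S\cap X$ and a companion $g\in J(Z(f))$ with $g/f$ unbounded along the tangent sequence (this uses Schauder hats and control of the denominators of the vertices of $S$ along the tuple $\lambda$). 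If the intent is to supply a proof, those constructions must actually be carried out; if the intent is to match the paper, a bare citation suffices and the sketch, while accurate in outline, is doing no logical work beyond it.
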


The duality between $\MVS$ and $\TS$ can be used to extend the above characterisation to any strongly semisimple MV-algebra.  We start by noticing the following crucial fact.

\begin{lemma}\label{l:StSeSi1}
An MV-algebra $\A$ is strongly semisimple if, and only if, each of its finitely generated subalgebras are strongly semisimple.
\end{lemma}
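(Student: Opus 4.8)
The plan is to reduce both implications to two elementary ``locality'' facts about principal ideals and the radical, which together show that strong semisimplicity is witnessed entirely inside finitely generated (indeed two-generated) subalgebras. Throughout, for an MV-algebra $\A$ and $a\in A$ I write $\langle a\rangle_{\A}$ for the principal ideal $\{y\in A\mid y\le na\text{ for some integer }n\ge 1\}$, where $na=a\oplus\cdots\oplus a$ ($n$ times), and I recall that an MV-algebra is semisimple exactly when its radical $\mathrm{Rad}$, the intersection of all maximal ideals, is trivial.

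The first fact is that principal ideals are computed locally: if $\B$ is a subalgebra of $\A$ and $b\in B$, then $\langle b\rangle_{\B}=\langle b\rangle_{\A}\cap B$, because the defining condition $y\le nb$ refers only to $y$ and $b$ and the order of $\B$ is inherited from that of $\A$. The second fact is the standard characterisation of the radical by infinitesimals: for any ideal $J$ of $\A$, a coset $[x]$ lies in $\mathrm{Rad}(\A/J)$ if and only if $n[x]\le\neg[x]$ for every $n$, equivalently if and only if $(nx)\odot x\in J$ for every $n\ge 1$. The crucial point, which drives the whole argument, is that this membership condition, exactly like the one in the first fact, only ever mentions the finitely many elements $x,b$ together with their MV-combinations, and is therefore insensitive to whether it is tested in $\A$ or in any subalgebra containing them.

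With these in hand both directions are immediate and symmetric. For the forward implication, I assume $\A$ is strongly semisimple, fix a finitely generated subalgebra $\B$ and $b\in B$; if $[x]\in\mathrm{Rad}(\B/\langle b\rangle_{\B})$ then $(nx)\odot x\in\langle b\rangle_{\B}\subseteq\langle b\rangle_{\A}$ for all $n$, so $[x]\in\mathrm{Rad}(\A/\langle b\rangle_{\A})=\{0\}$, whence $x\in\langle b\rangle_{\A}\cap B=\langle b\rangle_{\B}$ and $[x]=0$; thus $\B/\langle b\rangle_{\B}$ is semisimple. For the converse, I assume every finitely generated subalgebra is strongly semisimple, fix $a\in A$, and suppose $[x]\in\mathrm{Rad}(\A/\langle a\rangle_{\A})$. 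Letting $\B$ be the subalgebra generated by $a$ and $x$, each element $(nx)\odot x$ lies in $B$ and in $\langle a\rangle_{\A}$, hence in $\langle a\rangle_{\A}\cap B=\langle a\rangle_{\B}$, so $[x]\in\mathrm{Rad}(\B/\langle a\rangle_{\B})=\{0\}$, giving $x\in\langle a\rangle_{\B}\subseteq\langle a\rangle_{\A}$, i.e.\ $[x]=0$. Therefore $\A/\langle a\rangle_{\A}$ is semisimple and $\A$ is strongly semisimple.

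The only genuine work lies in establishing the two locality facts; once the radical is described element-wise and the principal ideal is recognised as the restriction of the ambient one, the passage between $\A$ and its finitely generated subalgebras becomes automatic, since both conditions are quantified only over integers and over finitely many fixed elements. I expect the sole subtlety to be the careful bookkeeping of the equality $\langle b\rangle_{\B}=\langle b\rangle_{\A}\cap B$, so that the element $x$ recovered in the ambient algebra is genuinely seen to be $0$ inside $\B$ as well.
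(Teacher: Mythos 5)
Your argument is correct, but it takes a different route from the paper: the paper disposes of this lemma in one line by citing Proposition~4.1 of Busaniche--Mundici \cite{BuMu201X}, whereas you give a self-contained, purely element-wise proof. Your two ``locality'' facts are both standard and correctly used: principal ideals restrict well to subalgebras because the condition $y\le nb$ is order-theoretic and the order is inherited, and the radical of a quotient is detected by the infinitesimal condition $(nx)\odot x\in J$ for all $n$ (this is the usual characterisation $\mathrm{Rad}=\{x\mid nx\le\neg x\text{ for all }n\}$ applied to $\A/J$, together with the fact that semisimplicity is equivalent to triviality of the radical). Both implications then go through exactly as you write them; the only point worth making explicit is the appeal to these two standard facts about the radical, which you correctly flag as the real content. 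Your approach actually buys slightly more than the statement: the forward direction shows that \emph{every} subalgebra (not just the finitely generated ones) of a strongly semisimple MV-algebra is strongly semisimple, and the converse shows that strong semisimplicity is already witnessed by the two-generated subalgebras $\langle a,x\rangle$ --- which is essentially the content of the cited result of Busaniche and Mundici, so you have in effect reproved it rather than invoked it.
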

\begin{proof}
It follows from \cite[Proposition~4.1]{BuMu201X}.
\end{proof}

\begin{proposition}\label{p:StSeSi2}
Given an MV-algebra  $A$, the following statements are equivalent:
\begin{enumerate}[(i)]
\item\label{p:StSeSi:item2} Each finitely generated subalgebra of $\A$ is strongly semisimple.
\item\label{p:StSeSi:item3} The algebra $\A$ is semisimple and for each finite $C\seq A$, the set $\pi^{A}_{C}(\FW(\A))\seq \twiddle{\I}^C$
does not have rationally outgoing $k$-tangents for any $k$.
\end{enumerate}
\end{proposition}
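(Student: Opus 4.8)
The plan is to establish the equivalence by relating the strong semisimplicity of finitely generated subalgebras of $\A$ to the geometric no-tangent condition via the duality and Theorem \ref{theorem:aereo}. First I would handle the forward implication \eqref{p:StSeSi:item2} $\Rightarrow$ \eqref{p:StSeSi:item3}. Assuming every finitely generated subalgebra of $\A$ is strongly semisimple, in particular each such subalgebra is semisimple; since $\A$ is the directed union of its finitely generated subalgebras and semisimplicity is preserved under such unions (equivalently, $\A\in\mathbb{ISP}(\uline{\I})$ because every element is separated by a homomorphism into $\uline{\I}$ witnessed on a finitely generated subalgebra), I conclude $\A$ is semisimple. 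Now fix a finite $C\seq A$ and let $\B$ be the subalgebra generated by $C$. By hypothesis $\B$ is strongly semisimple, so by Theorem \ref{theorem:aereo} applied to the closed set $\FW(\B)\seq\twiddle{\I}^{C}$ (here I use that $\B$ has at most $|C|$ generators, so its dual sits in a finite-dimensional cube), the set $\FW(\B)$ has no rationally outgoing $k$-tangents. By Lemma \ref{Lem:useful}, $\FW(\B)$ is $\Z$-homeomorphic to $\pi^{A}_{C}(\FW(\A))$, so I must transfer the no-tangent condition across a $\Z$-homeomorphism.

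For the converse \eqref{p:StSeSi:item3} $\Rightarrow$ \eqref{p:StSeSi:item2}, I reverse the argument. Take any finitely generated subalgebra $\B$ of $\A$, say generated by a finite set $C\seq A$. Since $\A$ is semisimple, so is its subalgebra $\B$. By Lemma \ref{Lem:useful} again, $\FW(\B)$ is $\Z$-homeomorphic to $\pi^{A}_{C}(\FW(\A))$, which by hypothesis has no rationally outgoing $k$-tangents for any $k$; transferring this back through the $\Z$-homeomorphism, $\FW(\B)$ has no rationally outgoing $k$-tangents. Since $\B$ is generated by $|C|$ elements, $\FW(\B)$ may be taken inside $\twiddle{\I}^{C}$, and $\McN(\FW(\B))\cong\B$ (by the dual equivalence between $\MVS$ and $\TS$, as $\B$ is semisimple). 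Theorem \ref{theorem:aereo} then yields that $\B\cong\McN(\FW(\B))$ is strongly semisimple, which is exactly \eqref{p:StSeSi:item2}.

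The main obstacle I anticipate is the invariance of the ``rationally outgoing $k$-tangent'' property under $\Z$-homeomorphisms between closed subsets of finite-dimensional cubes. The definition of a tangent is cast in terms of Euclidean limits, orthogonal projections, and rational simplices, none of which are manifestly preserved by $\Z$-maps; a $\Z$-homeomorphism is piecewise linear with integer coefficients, so it sends rational points to rational points and rational simplices to rational polyhedra, and its local behaviour is affine on each piece. The key to overcoming this is that strong semisimplicity is an intrinsic algebraic property of $\McN(X)$, so Theorem \ref{theorem:aereo} already tells us that the no-tangent condition is a duality-invariant of the isomorphism type of the dual algebra. Concretely, if $\Phi\colon\FW(\B)\to\pi^{A}_{C}(\FW(\A))$ is a $\Z$-homeomorphism, then it dualises to an MV-isomorphism $\McN(\Phi)\colon\McN(\pi^{A}_{C}(\FW(\A)))\to\McN(\FW(\B))$, and strong semisimplicity of one algebra is equivalent to that of the other; applying Theorem \ref{theorem:aereo} to both sides converts this algebraic equivalence into the desired geometric equivalence of the no-tangent conditions, so I never need to argue the geometric invariance by hand.

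Thus the proof is essentially a bookkeeping exercise combining Lemma \ref{l:StSeSi1}, Lemma \ref{Lem:useful}, the dual equivalence of the preceding section, and Theorem \ref{theorem:aereo}, with the only subtlety being the routing of the tangent condition through the duality rather than through a direct geometric computation. I would present both implications in parallel, emphasising that the step $\FW(\B)\cong_{\Z}\pi^{A}_{C}(\FW(\A))$ is the common hinge and that Theorem \ref{theorem:aereo} is what licenses passing freely between the algebraic formulation (strong semisimplicity of $\McN$) and the geometric one (absence of rationally outgoing tangents) on finite-dimensional dual spaces.
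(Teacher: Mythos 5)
Your proposal is correct and follows essentially the same route as the paper: derive semisimplicity of $\A$ from the hypothesis (the paper cites Lemma \ref{l:StSeSi1}), use Lemma \ref{Lem:useful} to identify $\FW(\B)$ with $\pi^{A}_{C}(\FW(\A))\seq\twiddle{\I}^{C}$, and then apply Theorem \ref{theorem:aereo} in both directions. Your explicit worry about transporting the tangent condition across a $\Z$-homeomorphism is resolved exactly as the paper implicitly does it --- by applying Theorem \ref{theorem:aereo} only to the finite-dimensional set $\pi^{A}_{C}(\FW(\A))$, whose $\McN$ is isomorphic to $\B$ --- so no further geometric argument is needed.
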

\begin{proof}
\eqref{p:StSeSi:item2}$\Rightarrow$\eqref{p:StSeSi:item3} We first notice that by Lemma \ref{p:StSeSi2}, $\A$ is strongly semisimple, hence semisimple.  Now, let $C$ be a finite subset of $A$ and $\B$  the subalgebra of $\A$ generated by $C$. By Lemma~\ref{Lem:useful}, $\pi^{A}_{C}(\FW(\A))\seq \twiddle{\I}^C$ is $\Z$-homeomorphic to $\FW(\B)$. By hypothesis $\B$ is strongly semisimple, so by Theorem~\ref{theorem:aereo}, $\FW(\B)\cong\pi^{A}_{C}(\FW(\A))$ has no rationally outgoing $k$-tangent. Hence \eqref{p:StSeSi:item3} is proved.

\eqref{p:StSeSi:item3}$\Rightarrow$\eqref{p:StSeSi:item2} Let $C$ finite subset of $A$ and $\B$  the subalgebra of $\A$ generated by $C$. Combining Lemma~\ref{Lem:useful} and Remark~\ref{Rem}, we have that $\pi^{A}_{C}(\FW(\A))\seq \twiddle{\I}^C$ is $\Z$-homeomorphic to $\FW(\B)$. Then by Theorem~\ref{theorem:aereo}, $\B$ is strongly semisimple if, and only if, $\pi^{A}_{C}(\FW(\A))$ has no rationally outgoing $k$-tangent. We conclude that \eqref{p:StSeSi:item2} and \eqref{p:StSeSi:item3} are equivalent.
\end{proof}

\begin{corollary}\label{cor:StSeSi}
An MV-algebra $A$ is strongly semisimple if, and only if, $\A\cong\McN(X)$ for a closed set $X\seq \twiddle{\I}^A$ such that, whenever $C\seq A$ is finite, the set $\pi^{A}_{C}(X)\seq \twiddle{\I}^C$ does not have rationally outgoing $k$-tangents for any $k$.
\end{corollary}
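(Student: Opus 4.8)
The plan is to derive Corollary~\ref{cor:StSeSi} by combining Proposition~\ref{p:StSeSi2} with the duality established in the previous section, observing that it is essentially a restatement of Proposition~\ref{p:StSeSi2} phrased intrinsically on the algebra rather than on a coordinate system. First I would invoke Lemma~\ref{l:StSeSi1}, which tells us that $\A$ is strongly semisimple if and only if each of its finitely generated subalgebras is strongly semisimple; this is exactly condition~\eqref{p:StSeSi:item2} of Proposition~\ref{p:StSeSi2}. Hence $\A$ is strongly semisimple if and only if condition~\eqref{p:StSeSi:item3} holds, namely that $\A$ is semisimple and, for each finite $C\seq A$, the projection $\pi^{A}_{C}(\FW(\A))\seq \twiddle{\I}^{C}$ has no rationally outgoing $k$-tangent for any $k$.

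Next I would translate this into the desired form. Since $\A$ is semisimple, the evaluation $\ev_{\A}\colon\A\to\McN(\FW(\A))$ is an isomorphism by the dual equivalence between $\MVS$ and $\TS$, so taking $X:=\FW(\A)\seq\twiddle{\I}^{A}$ gives $\A\cong\McN(X)$ with $X$ a closed subset of $\twiddle{\I}^{A}$ (closedness is part of $\FW$ ranging in $\TS$, by Theorem~\ref{Theo:RepFunctor}). Under this identification the condition on $\pi^{A}_{C}(\FW(\A))$ becomes exactly the stated condition on $\pi^{A}_{C}(X)$. This establishes the forward direction.

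For the converse, suppose $\A\cong\McN(X)$ for some closed $X\seq\twiddle{\I}^{A}$ whose finite projections have no rationally outgoing $k$-tangents. The algebra $\McN(X)$ is semisimple by \cite[Theorem~4.16]{Mun2011}, and by the dual equivalence we may replace $X$ by $\FW(\A)$ up to $\Z$-homeomorphism via $\cev_{X}$; since a $\Z$-homeomorphism between closed cubes is realised by piecewise linear maps with integer coefficients and commutes with projections onto finite coordinate sets (Theorem~\ref{thm:def-maps}\eqref{thm:def-maps:item3}), the rationally outgoing tangent structure of the finite projections is preserved. Thus condition~\eqref{p:StSeSi:item3} of Proposition~\ref{p:StSeSi2} holds, and therefore $\A$ is strongly semisimple.

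I expect the main subtlety to be the claim that the absence of rationally outgoing $k$-tangents is invariant under the identification of $X$ with $\FW(\A)$: a priori $X$ and $\FW(\A)$ live in $\twiddle{\I}^{A}$ via possibly different coordinate assignments, and one must ensure that the $\Z$-homeomorphism witnessing $\McN(X)\cong\McN(\FW(\A))$ interacts correctly with the finite-dimensional projections that enter the tangent definition. The cleanest route is to note that the tangent condition is really a property of the finitely generated subalgebra $\B$ generated by each finite $C\seq A$, together with Lemma~\ref{Lem:useful}, which identifies $\FW(\B)$ with $\pi^{A}_{C}(\FW(\A))$ up to $\Z$-homeomorphism; since Theorem~\ref{theorem:aereo} phrases strong semisimplicity of $\McN$ of a finite-dimensional closed set in tangent terms that are invariant under $\Z$-homeomorphism, the property transfers regardless of which particular embedding realises the subalgebra. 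Everything else is a direct unwinding of Proposition~\ref{p:StSeSi2} through the equivalence $\ev_{\A}$.
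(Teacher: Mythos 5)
Your proposal is correct and follows the route the paper intends: the corollary is stated without proof precisely because it is the combination of Lemma~\ref{l:StSeSi1}, Proposition~\ref{p:StSeSi2}, and the dual equivalence (taking $X=\FW(\A)$ with $\ev_{\A}$ in the forward direction). Your extra care in the converse --- reducing an arbitrary isomorphism $\A\cong\McN(X)$ to the finitely generated subalgebras generated by coordinate projections, and invoking Theorem~\ref{theorem:aereo} to make the tangent condition $\Z$-homeomorphism-invariant --- addresses a point the paper leaves implicit, and is the right way to close it.
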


%%%%%%%%%%%%%%%%%%%%%%%%%%%
\subsection{Polyhedral MV-algebras}
\label{sect:polyhedral}
%%%%%%%%%%%%%%%%%%%%%%%%%%%

An MV-algebra $\A$ is said \word{finitely generated polyhedral}  if there exist $n\in\mathbb{N}$ and a polyhedron $P\seq \mathbb{R}^{n}$ such that $\A\cong \McN(P)$. Finitely generated polyhedral MV algebras were introduced in \cite{BusCabMun201X}, were their main properties were proved. By definition polyhedral MV-algebras are finitely generated. In this section, we use the duality between semisimple MV-algebras and Tychonoff spaces to extend the notion of polyhedron and finitely generated polyhedral MV-algebra to the non finitely generated case.  Our definitions are based on the following observation (see also Remark \ref{r:inf-pol}).

\begin{lemma}
Let $\A$ be a finitely generated polyhedral MV-algebra. Then for each finite $C\seq A$, the set $\pi^{A}_{C}(\FW(\A))\seq\twiddle{\I}^C$ is a polyhedron. 
\end{lemma}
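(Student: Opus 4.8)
The plan is to reduce the infinite-dimensional statement to the known finitely generated case via the tools developed in the previous sections. By hypothesis $\A$ is a finitely generated polyhedral MV-algebra, so there exist $n\in\mathbb{N}$ and a polyhedron $P\seq\mathbb{R}^n$ with $\A\cong\McN(P)$. Since $P$ is a polyhedron it is in particular closed and bounded, hence (after the immaterial identification of compact subsets of $\mathbb{R}^n$ with closed subsets of $\twiddle{\I}^n$ noted in the footnote to Section~\ref{sect:maps}) we may regard $P$ as a closed set in $\twiddle{\I}^n$. The duality between $\MVS$ and $\TS$ then yields $\FW(\A)\cong\FW(\McN(P))$, and by the $\cev$ natural isomorphism of Proposition~\ref{p:adjunction}, $\FW(\McN(P))$ is $\Z$-homeomorphic to $P$. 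Thus $\FW(\A)$ is $\Z$-homeomorphic to the polyhedron $P\seq\twiddle{\I}^n$, and in particular $\A$ is finitely generated as an algebra.

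Next I would fix a finite set $C\seq A$ and let $\B$ be the subalgebra of $\A$ generated by $C$. The key step is to apply Lemma~\ref{Lem:useful}, which tells us that $\pi^{A}_{C}(\FW(\A))\seq\twiddle{\I}^C$ is $\Z$-homeomorphic to $\FW(\B)$. Since $\A$ is finitely generated, say by a finite set $D\seq A$, it is convenient to argue directly on the enveloping cube: enlarging $C$ to $C\cup D$ changes $\pi^{A}_{C}(\FW(\A))$ only by a further coordinate projection, so it suffices to establish the conclusion for finite $C$ containing a fixed finite generating set, and then observe that any further projection of a polyhedron is again a polyhedron.

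The heart of the matter is therefore showing that a coordinate projection of a polyhedron is a polyhedron. Concretely, $\FW(\A)$ being $\Z$-homeomorphic to the polyhedron $P\seq\twiddle{\I}^n$, the image $\pi^{A}_{C}(\FW(\A))$ is obtained from $P$ by composing a $\Z$-map (the homeomorphism together with the projection) whose component functions are piecewise linear with integer coefficients in finitely many of the $C$-coordinates. The image of a polyhedron under a piecewise linear map is again a polyhedron: writing $P=S_1\cup\cdots\cup S_t$ as a finite union of simplices, one subdivides each $S_i$ so that the piecewise linear map is affine on every piece, and the affine image of a simplex is a polytope, hence a polyhedron; a finite union of polyhedra is a polyhedron. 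This is the standard fact that the class of polyhedra is closed under piecewise linear images, for which I would cite \cite{Ew1996}.

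The main obstacle I anticipate is bookkeeping rather than conceptual: one must be careful that the $\Z$-homeomorphism between $\FW(\A)$ and $P$ interacts correctly with the coordinate projection $\pi^{A}_{C}$, since $\Z$-maps are defined relative to the enveloping cubes and projections only make sense with respect to a coordinate system (cf.\ Remark~\ref{r:embedding}). The clean way to handle this is to invoke Theorem~\ref{thm:def-maps}\eqref{thm:def-maps:item3}: the composite $\pi^{A}_{C}$ followed by the homeomorphism factors, on finitely many coordinates, through a genuine $\Z$-map between finite-dimensional cubes, so that the image is literally the image of $P$ under a finite-dimensional piecewise linear integer map, to which the polyhedral-image argument applies directly.
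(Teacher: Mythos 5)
Your proof is correct and follows essentially the same route as the paper's: both obtain a $\Z$-homeomorphism $\eta\colon P\to\FW(\A)$ from a finite-dimensional polyhedron $P$, observe that $\pi^{A}_{C}\circ\eta$ is a $\Z$-map (hence piecewise linear and depending on only finitely many coordinates), and conclude by the standard fact that the image of a polyhedron under a piecewise linear map is again a polyhedron. The detour through Lemma~\ref{Lem:useful} and the enlargement of $C$ by a generating set is unnecessary, as your final paragraph effectively acknowledges by reverting to the direct argument.
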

\begin{proof}
Since $\A$ is a finitely generated polyhedral MV-algebra, there exist a natural number $n>0$ and a polyhedron $P\in\I^n$ such that $\A\cong\McN(P)$. 
Then $P\cong\FW(\McN(P))\cong\FW(\A)$. Let $\eta\colon P\to \FW(\A)$  be a $\Z$-homeomorphism. Then $\pi^{A}_{C}\circ\eta$ is a $\Z$-map. Since $P$ is a polyhedron $\pi^{A}_{C}\circ\eta(P)=\pi^{A}_{C}(\eta(P))=\pi^{A}_{C}(\FW(\A))$ is a polyhedron, for it is the image of a polyhedron under a piecewise linear map (see e.g., \cite[Corollary 2.5]{rourke2012introduction}).
\end{proof}

\begin{definition}[Infinite dimensional polyhedra]
We say that $X\seq\twiddle{\I}^Y$ is an \word{infinite dimensional polyhedron} if for any finite $Z\seq Y$, $\pi^{Y}_{Z}(X)\seq\twiddle{\I}^Z$ is a (finite dimensional) polyhedron.
Moreover, if each $\pi^{Y}_{Z}(X)\seq\twiddle{\I}^Z$ is a rational polyhedron, we say that $X$ is a  \word{infinite dimensional rational polyhedron}.
We say that an MV-algebra $\A$ is \word{(rationally) polyhedral} if it is isomorphic to $\McN(X)$ for some (rational) polyhedron $X$ (not necessarily finite dimensional).  
\end{definition}
It is readily seen that being an infinite dimensional (rational) polyhedron is preserved by $\Z$-maps.
\begin{theorem}\label{Theo:CharPol}
Given an MV-algebra $\A$ the following are equivalent:
\begin{enumerate}[(i)]
\item\label{Theo:CharPol:item1} $\A$ is polyhedral;
\item\label{Theo:CharPol:item2} $\A$ is semisimple and $\FW(\A)$ is a (infinite dimensional) polyhedron;
\item\label{Theo:CharPol:item3} each finitely generated subalgebra of $\A$ is polyhedral.
\end{enumerate}
\end{theorem}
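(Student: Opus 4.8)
The plan is to prove the three-way equivalence by showing $\eqref{Theo:CharPol:item1}\Rightarrow\eqref{Theo:CharPol:item2}\Rightarrow\eqref{Theo:CharPol:item3}\Rightarrow\eqref{Theo:CharPol:item1}$, leaning throughout on the duality between $\MVS$ and $\TS$ established in the previous section, together with Lemma~\ref{Lem:useful} (which computes $\FW$ on finitely generated subalgebras as a projection) and the lemma immediately preceding the definition of infinite dimensional polyhedra (which shows that finitely generated polyhedral algebras have polyhedral finite projections).

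For $\eqref{Theo:CharPol:item1}\Rightarrow\eqref{Theo:CharPol:item2}$, suppose $\A\cong\McN(X)$ for some infinite dimensional polyhedron $X\seq\twiddle{\I}^Y$. Since $\McN$ ranges in $\MVS$ (as observed in the proof that $\McN$ and $\FW$ give a dual equivalence), $\A$ is semisimple. Applying $\FW$ and using that $\cev$ is a natural isomorphism, $\FW(\A)\cong\FW(\McN(X))\cong X$ via a $\Z$-homeomorphism; as being an infinite dimensional polyhedron is preserved under $\Z$-maps (noted just before the theorem), $\FW(\A)$ is an infinite dimensional polyhedron.

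For $\eqref{Theo:CharPol:item2}\Rightarrow\eqref{Theo:CharPol:item3}$, let $\A$ be semisimple with $\FW(\A)$ an infinite dimensional polyhedron, and let $\B$ be a finitely generated subalgebra, say generated by a finite set $C\seq A$. By Lemma~\ref{Lem:useful}, $\FW(\B)$ is $\Z$-homeomorphic to $\pi^{A}_{C}(\FW(\A))\seq\twiddle{\I}^C$. By the definition of infinite dimensional polyhedron applied to the finite set $C$, this projection is a genuine finite dimensional polyhedron $P\seq\twiddle{\I}^C$. Then, using the dual equivalence (so that $\B\cong\McN(\FW(\B))$), we get $\B\cong\McN(P)$ for a finite dimensional polyhedron $P$, which is exactly the definition of $\B$ being finitely generated polyhedral.

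For $\eqref{Theo:CharPol:item3}\Rightarrow\eqref{Theo:CharPol:item1}$, assume every finitely generated subalgebra of $\A$ is polyhedral; I first argue $\A$ is semisimple (each finitely generated polyhedral algebra is semisimple, and semisimplicity is determined on finitely generated subalgebras), so $\A\cong\McN(\FW(\A))$ with $\FW(\A)\seq\twiddle{\I}^A$ closed. It then suffices to show $X:=\FW(\A)$ is an infinite dimensional polyhedron, i.e.\ that $\pi^{A}_{C}(X)$ is a polyhedron for every finite $C\seq A$. Fixing such a $C$ and letting $\B$ be the subalgebra it generates, Lemma~\ref{Lem:useful} identifies $\pi^{A}_{C}(X)$ (up to $\Z$-homeomorphism) with $\FW(\B)$; since $\B$ is finitely generated polyhedral by hypothesis, the lemma preceding the definition guarantees $\pi^{A}_{C}(\FW(\A))$ is a polyhedron. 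This is the main point to get right: the hypothesis gives \emph{some} polyhedral presentation of $\B$ as $\McN(P)$, and one must pass from that to the statement that the specific projection $\pi^{A}_{C}(\FW(\A))$ is polyhedral, which is precisely what the preceding lemma supplies (a $\Z$-homeomorphism sends a polyhedron to a polyhedron, being a piecewise linear image). Hence every finite projection of $X$ is a polyhedron, $X$ is an infinite dimensional polyhedron, and $\A\cong\McN(X)$ is polyhedral, closing the cycle. The main obstacle is the last implication: one must be careful that finite generation of $\B$ forces the \emph{dimension} of its dual to be finite (guaranteed since $C$ is finite and $\FW(\B)$ embeds in $\twiddle{\I}^C$), so that the abstract polyhedrality of $\B$ transfers to genuine finite dimensional polyhedrality of the relevant projection.
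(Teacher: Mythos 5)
Your proposal is correct and follows essentially the same route as the paper: the equivalence (i)$\Leftrightarrow$(ii) via the dual equivalence and the preservation of infinite dimensional polyhedra under $\Z$-homeomorphisms, and the implications (ii)$\Rightarrow$(iii) and (iii)$\Rightarrow$(i) via Lemma~\ref{Lem:useful} identifying $\FW(\B)$ with the projection $\pi^{A}_{C}(\FW(\A))$. The only cosmetic difference is that you close a cycle (i)$\Rightarrow$(ii)$\Rightarrow$(iii)$\Rightarrow$(i) where the paper proves (i)$\Leftrightarrow$(ii) directly first; your extra care in the last step about finite projections being genuine finite dimensional polyhedra matches what the paper asserts more tersely.
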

\begin{proof}
\eqref{Theo:CharPol:item1}$\Leftrightarrow$\eqref{Theo:CharPol:item2}. $\A$ is polyhedral if, and only if, there exists a, possibly infinite dimensional, polyhedron $P$ such that $\A\cong \McN(P)$.  In turn, this is equivalent to the fact that $\A$ is semisimple and $\FW(\A)\cong \FW(\McN(P))\cong P$.

\eqref{Theo:CharPol:item2}$\Rightarrow$\eqref{Theo:CharPol:item3}. Let $\B$ be the subalgebra of $\A$ generated by a finite subset $C\seq A$.  If $\A$ is semisimple, by Lemma \ref{Lem:useful} $\FW(\B)$ is $\Z$-homeomorphic to $\pi^{A}_{C}(\FW(\A))$. Since $\FW(\A)$ is an infinite dimensional polyhedron, $\FW(\B)$ if a polyhedron, hence $\B$ is finite dimensional polyhedral.  We conclude that $\A$ is  polyhedral.

\eqref{Theo:CharPol:item3}$\Rightarrow$\eqref{Theo:CharPol:item1} Observe first that since every singly-generated subalgebra of $\A$ is polyhedral, it is also semisimple by definition.  Therefore $\A$ cannot contain infinitesimal elements, hence it is semisimple.  Consider now, the dual space $\FW(\A)\seq \twiddle{\I}^{A}$.  Let $C$ be a finite subset of $A$ and $\B$ the algebra generated by $C$.  By Lemma \ref{Lem:useful}, $\FW(B)\cong \pi^{A}_{C}(\FW(\A))$. By hypothesis $\B$ is polyhedral, therefore $\pi^{A}_{C}(\FW(\A))$ is a (finite dimensional) polyhedron, and the claim is proved.
\end{proof}

\begin{corollary}
Every polyhedral MV-algebra is strongly semisimple.
\end{corollary}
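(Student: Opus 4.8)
The plan is to reduce the statement to the finitely generated case and then invoke the tangent characterisation of Theorem \ref{theorem:aereo}. First I would apply the equivalence \eqref{Theo:CharPol:item1}$\Leftrightarrow$\eqref{Theo:CharPol:item3} of Theorem \ref{Theo:CharPol}: if $\A$ is polyhedral, then every finitely generated subalgebra $\B$ of $\A$ is (finite dimensional) polyhedral, say $\B\cong\McN(P)$ for a polyhedron $P\seq\I^n$. On the other side, Lemma \ref{l:StSeSi1} tells me that $\A$ is strongly semisimple precisely when all its finitely generated subalgebras are. Hence it suffices to establish the single-algebra statement: every finitely generated polyhedral MV-algebra $\McN(P)$, with $P\seq\I^n$ a polyhedron, is strongly semisimple.

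For this I would apply Theorem \ref{theorem:aereo} with $X=P$: strong semisimplicity of $\McN(P)$ is equivalent to the absence of rationally outgoing $k$-tangents of $P$ for $k=1,\ldots,n-1$. Thus the whole content reduces to the geometric claim that a polyhedron admits no rationally outgoing tangent.

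To prove this claim I would argue by contradiction. Suppose $u=(u_1,\ldots,u_k)$ were a rationally outgoing $k$-tangent of $P$ at $x$, witnessed by a sequence $x_1,x_2,\dots\in P$ and by a rational simplex $S$ with a face $F\seq S$ satisfying $F\cap P=S\cap P$. Since $x$ occurs as a vertex of the convex hull in condition (1), we have $x\in S$, and because $P$ is closed and $x=\lim_i x_i$, also $x\in P$; hence $x\in S\cap P=F\cap P\seq F$. The essential point is that $P$, being a finite union of simplices, locally coincides with its (polyhedral) tangent cone at $x$, so each tangent direction is realised by an honest line segment issuing from $x$ inside $P$. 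By condition (1) such a segment lies in $S$, and then by $S\cap P=F\cap P$ it lies in $F$; since $F$ is a face of the simplex $S$, its affine hull meets $S$ exactly in $F$, which forces the corresponding staircase vertex $x+\lambda_1u_1+\cdots+\lambda_s u_s$ into $F$. Propagating this through the iterated orthogonalised directions $x_i^{s}$ places the entire convex hull of condition (1) inside $F$, contradicting condition (2).

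The routine part is the double reduction (to finitely generated subalgebras and to a single polyhedron), which is immediate from Theorem \ref{Theo:CharPol}, Lemma \ref{l:StSeSi1} and Theorem \ref{theorem:aereo}. The main obstacle is the geometric lemma that a polyhedron carries no rationally outgoing tangent: the delicate step is to control the higher-order Severi tangents $u_s$, which arise from the iterated orthogonal projections and need not a priori lie in the tangent cone, and to show nonetheless that the confinement $S\cap P=F\cap P$ can be transported to the whole staircase. Once this is settled the contradiction with condition (2) is immediate, and the corollary follows.
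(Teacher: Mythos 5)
Your reduction to the finitely generated case is exactly the paper's: Theorem \ref{Theo:CharPol} shows every finitely generated subalgebra of a polyhedral algebra is (finite dimensional) polyhedral, and Lemma \ref{l:StSeSi1} (equivalently, Corollary \ref{cor:StSeSi} via Lemma \ref{Lem:useful}) transports strong semisimplicity back to $\A$. Where you diverge is the finitely generated case itself: the paper simply cites \cite[Theorem 5.1]{BusCabMun201X} for the fact that every finitely generated polyhedral MV-algebra is strongly semisimple, whereas you attempt to reprove that fact from scratch by showing, via Theorem \ref{theorem:aereo}, that a polyhedron admits no rationally outgoing $k$-tangent.

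That geometric lemma is true, but your argument for it has a genuine gap, and it sits exactly where you flag ``the main obstacle.'' The local-cone property of a polyhedron gives you a segment $[x,x+\epsilon u_1]\seq P$ only for the \emph{first-order} direction $u_1$, since $u_1$ is a limit of the normalised chords $(x_i-x)/\|x_i-x\|$ and the set of such directions at a point of a polyhedron is a closed cone realised by actual segments. For $s\geq 2$ the vectors $u_s$ are not tangent directions of $P$ at $x$ in this sense: they are limits of the iterated, renormalised residuals $x_i^{s}$, and there is in general no point of the staircase from which $P$ contains a segment in direction $u_s$. Consequently the step ``propagating this through the iterated orthogonalised directions places the entire convex hull of condition (1) inside $F$'' is an assertion of precisely what must be proved, not a proof; the confinement $S\cap P=F\cap P$ gives you control only over points that you have already shown to lie in $P$, and for $s\geq 2$ you have produced none. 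Closing this gap is essentially the content of \cite[Theorem 5.1]{BusCabMun201X} (building on \cite{Cab201X}), where one shows that for a polyhedron any Bouligand--Severi $k$-tangent has a suitably shrunken staircase entirely inside $P$, after passing to a single simplex of $P$ containing infinitely many of the $x_i$. Either carry out that argument in full or, as the paper does, cite the published result.
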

\begin{proof}
By \cite[Theorem 5.1]{BusCabMun201X}, every finitely generated polyhedral MV-algebra is strongly semisimple. So, combining Corollary~\ref{cor:StSeSi} and Theorem~\ref{Theo:CharPol} we obtain the statement.
\end{proof}

\begin{theorem}
An MV-algebra is  polyhedral if, and only if, it is the direct colimit of finitely generated polyhedral MV-algebras where transition maps are embeddings.  Dually, a compact subspace of $[0,1]^{X}$ is a (infinite dimensional) polyhedron if, and only if, it is the direct limit of finite dimensional polyhedra, with surjective $\Z$-maps among them.
\end{theorem}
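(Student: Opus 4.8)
The plan is to prove both directions of the equivalence by exploiting the duality $\McN\dashv\FW$ between $\MVS$ and $\TS$ established in the previous section, so that the algebraic statement and its dual topological statement become two faces of the same argument. Concretely, I would first fix the correspondence: by Theorem~\ref{Theo:CharPol}, an MV-algebra $\A$ is polyhedral iff $\A$ is semisimple and $X:=\FW(\A)\seq\twiddle{\I}^A$ is an infinite dimensional polyhedron, i.e.\ every finite projection $\pi^{A}_{C}(X)$ is a finite dimensional polyhedron. Since $\McN$ and $\FW$ are mutually inverse (up to natural isomorphism) on these categories and send embeddings to surjective $\Z$-maps and vice versa (by Lemma~\ref{Lem:useful}, the dual of a subalgebra inclusion $\B\hookrightarrow\A$ is the surjective projection $\pi^{A}_{C}\colon\FW(\A)\twoheadrightarrow\FW(\B)$), a colimit of finitely generated polyhedral algebras along embeddings corresponds contravariantly to a limit of finite dimensional polyhedra along surjective $\Z$-maps. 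Thus it suffices to prove the dual topological statement and transport it through the functors; I would state the MV-algebraic half as a corollary of the $\TS$ half.

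For the forward direction of the topological statement, suppose $X\seq[0,1]^{Y}$ is an infinite dimensional polyhedron. I would take the index set to be the directed poset of finite subsets $Z\seq Y$ ordered by inclusion, and set $X_Z:=\pi^{Y}_{Z}(X)$, which is a finite dimensional polyhedron by hypothesis. For $Z\seq Z'$ the projection $\pi^{Z'}_{Z}$ restricts to a surjective $\Z$-map $X_{Z'}\twoheadrightarrow X_Z$; surjectivity is immediate since $\pi^{Y}_{Z}=\pi^{Z'}_{Z}\circ\pi^{Y}_{Z'}$ and both project $X$ onto $X_Z$. This yields an inverse system of finite dimensional polyhedra with surjective $\Z$-maps, and I would verify that $X=\varprojlim_{Z} X_Z$: the canonical map $X\to\varprojlim X_Z$ sending $x$ to its family of projections is continuous, injective (the finite projections jointly separate points of $[0,1]^Y$), and surjective by a compactness argument (a compatible thread of points in the $X_Z$ determines a point of $[0,1]^Y$ lying in every cylinder over $X_Z$, hence in the closed set $X$). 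For the converse, given an inverse system of finite dimensional polyhedra with surjective $\Z$-maps, its limit sits inside a Tychonoff cube as a closed subspace, and I would check that each finite projection of the limit is, up to $\Z$-homeomorphism, one of the $X_Z$ (using surjectivity of the transition maps and cofinality), hence a finite dimensional polyhedron; therefore the limit is an infinite dimensional polyhedron.

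The main obstacle I anticipate is the careful bookkeeping that makes the inverse system over finite subsets genuinely \emph{cofinal} and compatible with the generating data on the algebraic side, together with the verification that finite projections of a general inverse limit of polyhedra recover exactly the prescribed $X_Z$. On the algebraic side one must check that an arbitrary polyhedral $\A$ is the directed union of the subalgebras generated by its finite subsets, that each such subalgebra is finitely generated polyhedral (this is precisely the content of Theorem~\ref{Theo:CharPol}\eqref{Theo:CharPol:item3}), and that the inclusions are the transition maps of the colimit; dualising via $\FW$ then produces exactly the inverse system of the topological half. The surjectivity of transition maps on one side and embedding property on the other are dual under Lemma~\ref{Lem:useful}, so once the limit/colimit identification is set up, both statements follow simultaneously. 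I would present the proof by establishing the topological equivalence first and then noting that applying $\McN$ (respectively $\FW$) and using that these functors convert limits into colimits and surjective $\Z$-maps into embeddings yields the algebraic equivalence; the functors' continuity with respect to these (co)limits is the one point that deserves an explicit, if brief, justification.
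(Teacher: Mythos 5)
Your proposal is correct, but it runs the argument on the opposite side of the duality from the paper. The paper proves the \emph{algebraic} statement directly and obtains the topological one ``modulo the duality'': the forward direction is immediate from the universal-algebraic fact that every algebra is the directed colimit of its finitely generated subalgebras along inclusions, combined with Theorem~\ref{Theo:CharPol}; for the converse, given $\A=\varinjlim \A_i$ with embeddings, the generators of any finitely generated subalgebra $\B$ land in some $\A_{\bar\imath}$ by directedness, so $\B$ is a subalgebra of a finitely generated polyhedral algebra and hence polyhedral by Theorem~\ref{Theo:CharPol}, and one concludes again by Theorem~\ref{Theo:CharPol}\eqref{Theo:CharPol:item3}$\Rightarrow$\eqref{Theo:CharPol:item1}. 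You instead prove the \emph{topological} statement first ($X\cong\varprojlim_Z \pi^Y_Z(X)$ via the compactness/separation argument, and conversely that finite projections of an inverse limit with surjective transition maps are $\Z$-map images of polyhedra) and then transport through $\McN$ and $\FW$. Both routes rest on the same pillars (Theorem~\ref{Theo:CharPol} and Lemma~\ref{Lem:useful}), and your converse argument is the exact mirror of the paper's, but the paper's order of proof is lighter: it needs no inverse-limit bookkeeping, no discussion of the enveloping cube of the limit (which, as Remark~\ref{r:embedding} stresses, is genuinely part of the data in $\TS$), and no separate verification that the functors interchange the relevant (co)limits. Your route also has one step you should make explicit before the duality can be invoked at all: the colimit algebra must be known to be semisimple, which holds because a directed union of semisimple algebras along embeddings contains no infinitesimals; the paper sidesteps this by funnelling everything through Theorem~\ref{Theo:CharPol}, whose proof already contains that observation. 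What your approach buys in exchange is a concrete description of the inverse system realising an infinite dimensional polyhedron, which makes the second sentence of the theorem the primary statement rather than an afterthought.
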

\begin{proof}
One direction is a direct consequence of the fact that every algebra is a direct colimit of its finitely generated subalgebras with inclusions as transition maps and Theorem \ref{Theo:CharPol}.  For the other direction, consider an MV-algebra $A$ that is a colimit of a directed diagram of finitely generated polyhedral MV-algebras $(A_{i}, e_{ij}\mid i,j\in I)$ where $e_{ij}\colon A_{i}\to A_{j}$ is an embedding. Consider a finitely generated subalgebra $B$ of $A$, and let $a_{1},...,a_{n}$ its generators. As $a_{1},...,a_{n}$ belong to $A$ and the colimit maps are embeddings, there must be indices $i_{1},...,i_{n}\in I$ such that $a_{{k}}\in A_{i_{k}}$.  Since the diagram is directed, there exists $\bar{i}\in I$ with transition maps $e_{i_{k}\bar{i}}\colon A_{i_{k}}\to A_{\bar{i}}$.  Then, $B$ is a subalgebra of $A_{\bar{i}}$ and the latter is polyhedral by hypothesis.  So, by Theorem \ref{Theo:CharPol}, $B$ is polyhedral. This concludes the proof of the equivalence.
Finally, the second statement is equivalent to the first, modulo the duality of the previous section.
\end{proof}

%\bibliographystyle{spmpsci}
%\bibliography{biblio}

\begin{thebibliography}{10}
\providecommand{\url}[1]{{#1}}
\providecommand{\urlprefix}{URL }
\expandafter\ifx\csname urlstyle\endcsname\relax
  \providecommand{\doi}[1]{DOI~\discretionary{}{}{}#1}\else
  \providecommand{\doi}{DOI~\discretionary{}{}{}\begingroup
  \urlstyle{rm}\Url}\fi

\bibitem{Bo1930}
Bouligand, H.: Sur les surfaces d\'epourvues de points hyperlimites.
\newblock Ann. Soc. Polon. Math. \textbf{9}, 32--41 (1930)

\bibitem{BS1981}
Burris, S., Sankappanavar, H.P.: A course in universal algebra.
\newblock Graduate texts in Mathematics. Springer-Verlag (1981)

\bibitem{BusCabMun201X}
Busaniche, M., Cabrer, L., Mundici, D.: {Polyhedral MV-algebras}.
\newblock Fuzzy Sets and Systems  (in press, doi:10.1016/j.fss.2014.06.015)

\bibitem{BuMu201X}
Busaniche, M., Mundici, D.: {Bouligand-Severi tangents in MV-algebras}.
\newblock Rev. Mat. Iberoamericana \textbf{30}(1), 191--201 (2014)

\bibitem{Cab201X}
Cabrer, L.M.: {Bouligand-Severi $k$-tangents and strongly semisimple
  MV-algebras}.
\newblock Journal of Algebra \textbf{404}, 271--283 (2014)

\bibitem{cabrer2006priestley}
Cabrer, L.M., Celani, S.: Priestley dualities for some lattice-ordered
  algebraic structures, including {MTL}, {IMTL} and {MV}-algebras.
\newblock Open Mathematics \textbf{4}(4), 600--623 (2006)

\bibitem{Caramello:2014aa}
Caramello, O., Marra, V., Spada, L.: General affine adjunctions, Nullstellens{\"a}tze, and dualities  (2014).
\newblock \urlprefix\url{http://arxiv.org/abs/1412.8692}

\bibitem{CigDotMun2000}
Cignoli, R., D'Ottaviano, I., Mundici, D.: Algebraic Foundations of Many-valued
  Reasoning, \emph{Trends in Logic, Studia Logica Library}, vol.~7.
\newblock Kluwer Academic (2000)

\bibitem{cignoli2004extending}
Cignoli, R., Dubuc, E.J., Mundici, D.: Extending Stone duality to multisets and
  locally finite MV-algebras.
\newblock Journal of pure and applied algebra \textbf{189}(1), 37--59 (2004)

\bibitem{NatDual88}
Clark, D.M., Davey, B.A.: Natural dualities for the working algebraist,
  \emph{Cambridge Studies in Advanced Mathematics}, vol.~57.
\newblock Cambridge University Press, Cambridge (1998)

\bibitem{cornish1977chinese}
Cornish, W.: The chinese remainder theorem and sheaf representations.
\newblock Fundamenta Mathematicae \textbf{3}(96), 177--187 (1977)

\bibitem{piggy}
Davey, B.A., Haviar, M., Priestley, H.A.: Piggyback dualities revisited.
\newblock Algebra Universalis  (to appear)  \urlprefix\url{http://arxiv.org/1501.02512}

\bibitem{NatDualChap}
Davey, B.A., Werner, H.: Dualities and equivalences for varieties of algebras.
\newblock In Contributions to Lattice Theory (Szeged, 1980). Huhn, A.P. and
  Schmidt, E.T. (eds.) pp. 101--275 (1983)

\bibitem{DuPo2010}
Dubuc, E., Poveda, Y.: {Representation theory of MV-algebras}.
\newblock Annals of Pure and Applied Logic  \textbf{161}(8), 1024--1046 (2010)

\bibitem{Ew1996}
Ewald, G.: Combinatorial convexity and algebraic geometry, \emph{Grad. Texts in
  Math.}, vol. 168.
\newblock Springer-Verlag, New York (1996)

\bibitem{filipoiu1995compact}
Filipoiu, A., Georgescu, G.: Compact and Pierce representations of MV-algebras.
\newblock Revue Roumaine de Mathematiques Pures et Appliquees \textbf{40}(7),
  599--618 (1995)

\bibitem{gehrke2014sheaf}
Gehrke, M., van Gool, S.J., Marra, V.: Sheaf representations of {MV}-algebras
  and lattice-ordered abelian groups via duality.
\newblock Journal of Algebra \textbf{417}, 290--332 (2014)

\bibitem{MR0422107}
Keimel, K.: The representation of lattice-ordered groups and rings by sections
  in sheaves.
\newblock In: Lectures on the applications of sheaves to ring theory ({T}ulane
  {U}niv. {R}ing and {O}perator {T}heory {Y}ear, 1970--1971, {V}ol. {III}), pp.
  1--98. Lecture Notes in Math., Vol. 248. Springer, Berlin (1971)

\bibitem{McL1998}
Mac~Lane, S.: Categories for the working mathematician, vol.~5.
\newblock Springer Verlag (1998)

\bibitem{MarSpa2013}
Marra, V., Spada, L.: {The dual adjunction between MV-algebras and Tychonoff
  spaces}.
\newblock Studia Logica (Special issue dedicated to the memory of Leo Esakia)
  \textbf{100}(1-2), 253--278 (2012)

\bibitem{MarSpa11}
Marra, V., Spada, L.: Duality, projectivity, and unification in {\L}ukasiewicz
  logic and {MV}-algebras.
\newblock Ann. Pure Appl. Logic \textbf{164}(3), 192--210 (2013).

\bibitem{martinez1990priestley}
Martinez, N.G.: The {P}riestley duality for {W}ajsberg algebras.
\newblock Studia Logica \textbf{49}(1), 31--46 (1990)

\bibitem{martinez1996simplified}
Martinez, N.G.: A simplified duality for implicative lattices and
  $\ell$-groups.
\newblock Studia Logica \textbf{56}(1-2), 185--204 (1996)

\bibitem{mcnaughton1951theorem}
McNaughton, R.: A theorem about infinite-valued sentential logic.
\newblock The Journal of Symbolic Logic \textbf{16}(01), 1--13 (1951)

\bibitem{Mun2011}
Mundici, D.: Advanced {\L}ukasiewicz calculus and MV-algebras, \emph{Trends in
  Logic}, vol.~35.
\newblock Springer Verlag, New York (2011)

\bibitem{niederkorn2001natural}
Niederkorn, P.: Natural dualities for varieties of {MV}-algebras, {I}.
\newblock Journal of mathematical analysis and applications \textbf{255}(1),
  58--73 (2001)

\bibitem{rourke2012introduction}
Rourke, C., Sanderson, B.: Introduction to piecewise-linear topology.
\newblock Springer Science \& Business Media (2012)

\bibitem{Se1927}
Severi, F.: Conferenze di geometria algebrica (Raccolte da B. Segre), pp.
  1927--1930.
\newblock Stabilimento tipo-litografico del Genio Civile and Zanichelli (1927)

\bibitem{Se1931}
Severi, F.: Su alcune questioni di topologia infinitesimale.
\newblock Ann. Soc. Polon. Math. \textbf{9}, 97--108 (1931)

\end{thebibliography}

\end{document}